\theoremstyle{plain}
\numberwithin{equation}{section}
\newcommand{\N}{{\mathbb N}}
\newcommand{\R}{{\mathbb R}}
\definecolor{blu}{rgb}{0,0,1}
\newcommand{\e}{\varepsilon}
\def\r{\mathbb R}
\DeclareMathOperator{\Hyperg}{\mbox{ }_2 F_{1}}
\DeclareMathOperator{\supp}{supp}
\newcommand*{\abs}[1]{\left\vert #1\right\vert}
\newcommand*{\norm}[1]{\left\Vert #1\right\Vert}
\newcommand{\Sph}{\mathbb{S}}
\newcommand{\hsp}{\hspace{0.2cm}}
\newtheorem{theorem}{Theorem}[section]
\newtheorem{definition}[theorem]{Definition}
\newtheorem{proposition}[theorem]{Proposition}
\newtheorem{remark}[theorem]{Remark}
\newtheorem{lemma}[theorem]{Lemma}
\theoremstyle{definition}
\begin{document}

	\title[Singular solutions to the fractional Yamabe problem]{Qualitative properties of singular solutions to the fractional Yamabe problem}
	
	\author{Sergio Cruz-Bl\'{a}zquez \textsuperscript{1}}
	\address{Sergio Cruz-Bl\'{a}zquez, 
		\newline Università degli Studi di Bari \\ Dipartimento di Matematica \\ Via Edoardo Orabona 4 70125 Bari BA, Italy.}
		\email{sergio.cruz@uniba.it}
	
	\author{Azahara DelaTorre \textsuperscript{*}}	% \textsuperscript{*}{\footnote{\textsuperscript{(*)}(*) is used to denote the Corresponding Author.}}
	\address{Azahara DelaTorre,
		\newline Sapienza Universit\`{a} di Roma\\
		Dipartimento di Matematica Guido Castelnuovo\\
	Facolt\`a Scienze matematiche, fisiche e naturali\\
	Piazzale Aldo Moro, 5, 00185 Roma RM, Italy.
}
		\email{azahara.delatorrepedraza@uniroma1.it }
	
	\author{David Ruiz}
	\address{David Ruiz
		\newline IMAG, Universidad de Granada\\
		Departamento de An\'alisis Matem\'atico\\
		Campus Fuentenueva\\
		18071 Granada, Spain.}
	\email{daruiz@ugr.es}

\thanks{(*) Corresponding Author. \newline
	\textsubscript{1} Present Address: Universidad de Granada, Departamento de An\'alisis Matem\'atico, Campus Fuentenueva 18071 Granada, Spain. Email: sergiocruz@ugr.es \newline
	S.C. is supported by the project \textit{Quantitative and qualitative aspects of nonlinear PDEs} from the University of Campania Luigi Vanvitelli, funded under the MIUR bando 2017 PRIN - JPCAP. Part of this work was carried out during his visit to the University ``Sapienza Universit\`{a} di Roma'', to which he is grateful. \newline A. DlT acknowledges financial support from the Spanish Ministry of Science and Innovation (MICINN) through the grant Juan de la Cierva-Incorporaci\'{o}n 2018 with ref. IJC2018-036320-I and from the Spanish Government through the grant MTM2017-85757-P (MICIU). She is also supported by  Fondi Ateneo -- Sapienza Università di Roma. \newline
The three autors have been supported by the FEDER-MINECO Grant PID2021-122122NB-I00 and by J. Andalucia (FQM-116). A. DlT and D. R. also acknowledge financial support from the Spanish Ministry of Science and Innovation (MICINN), through the \emph{IMAG-Maria de Maeztu} Excellence Grant CEX2020-001105-M/AEI/10.13039/501100011033.}

	\date{}
	\maketitle

	\begin{abstract} In this paper we are interested in the qualitative properties of the solutions to the fractional Yamabe problem in $\R^n$ which present an isolated singularity. In particular, we prove that the Morse index of any such solution is infinity. The proof uses an Emden Fowler type transformation, so that we can pass to a nonlocal 1D problem posed in $\R$. 
	
\end{abstract}

\medskip

\textbf{Keywords}: Isolated singularities, fractional Yamabe problem, nonlocal ODE, Morse index.

\medskip 

\textbf{Mathematics Subject Classification (2020).}
35J61, 35R11, 53C18.

\section{Introduction and statement of the results}

The problem of finding a metric conformal to a given one with positive constant scalar curvature receives the name of Yamabe problem, and is a classic problem in Geometric Analysis. In the case of $\R^n$ ($n \geq 3$), if the conformal metric is written as $g_u= u^{\frac{4}{n-2}}|dx|^2$, we are led to the search of positive solutions to the equation:

$$ - \Delta u = u^{\frac{n+2}{n-2}} \mbox{ in } \R^n.$$
It is well known that the only positive smooth solutions to such problem are the \emph{Talentian} functions, which correspond to the metrics coming from the stereographic projection, composed with dilations and translations of the euclidean space.

Of particular interest is the existence of such conformal metrics with singularities: for instance, one can consider the problem:

\begin{equation} \label{presing} 
	 - \Delta u = u^{\frac{n+2}{n-2}} \mbox{ in } \R^n \setminus\{0\}, \ \lim_{x \to 0} u(x)= +\infty.
  \end{equation}
It was shown by Caffarellli, Gidas and Spruck \cite{CafGidSpr} that positive solutions of \eqref{presing} must be radially symmetric and the solution can be written as 
\[
u(x)=|x|^{\frac{-(n-2)}{2}}v(|x|),
\]
where $v$ is a function bounded between two positive constants. Writing the equation for the new function $v(r)$ and using the Emden Fowler change of variable ($|x|=r=e^{-t}$) the problem reduces to a second order ODE, that can be explicitly solved \cite{Sch2,Sch3}. We remark that the analysis of its phase portrait shows that all solutions must be periodic in the $t$ variable.

In the last years, nonlocal problems have captured a lot of attention and different generalizations of the known curvatures to the nonlocal setting have been developed. In particular, using the relation between scattering operators of asymptotically hyperbolic metrics and  Dirichlet-to-Neumann operators of degenerate elliptic problems (see \cite{GraZwo}), a definition of a one-parameter family of nonlocal operators was introduced by Gonz\'alez and Chang in \cite{ChaGon}. Correspondingly, a one-parameter family of intrinsic curvatures $Q_s$ with good conformal properties are defined. 

The fractional or nonlocal Yamabe problem has been posed in parallel to the classic one: finding a complete conformal metric with constant fractional curvature. Solvability of the problem was first considered by Gonz\'alez and Qing \cite{GonQin} and, assuming some dimensional and geometric properties on the manifold, it was proven in \cite{GonQin,GonWan,KimMusWei}. %by testing with the bubble and a possible correction.
The most general case was proven by Kim, Musso and Wei \cite{KimMusWei}, assuming the validity of the fractional positive mass conjecture. %Testing with the (global) Green function.
 Let us observe that the fractional Yamabe problem with $s=\frac{1}{2}$ is deeply related to the so-called Escobar problem, which is another analogue of the Yamabe problem for manifolds with boundaries. Being more specific, the solutions to the problem:
\[
 	\left\{\begin{split}
 		&{\Delta u =0}
 		\mbox{ in }\mathbb R^n_+ \\
 		& {\frac{\partial u}{\partial \nu}=-Cu^{\frac{n}{n-2}} }	\mbox{ on }\mathbb R^{n-1},
 	\end{split}
 	\right.
\]
give rise to flat metrics in $\R^n_+$ with constant mean curvature of the boundary. And such problem is equivalent (via the well known extension for the fractional Laplacian) to the nonlocal equation:
 
\[
 	(-\Delta)^{s} u=C u^{\frac{N+2s}{N-2s}} 	\mbox{ on }\mathbb R^N,
\]
 with $N=n-1$, $s=\tfrac{1}{2}$.

When we allow the presence of singularities, the existence of complete Yamabe metrics depends on the geometry of the ambient manifold. In particular, there are upper bounds on the Hausdorff dimension of the singular set, which are explicit for the sphere (see \cite{SchYau}). In the nonlocal setting, a necessary condition was found in \cite{GonMazSir} (see also \cite{ACDFGW, ChaDlT2}).

In this paper we consider solutions to the fractional Yamabe problem in $\R^n$ with one isolated singularity, i.e.,

\begin{equation}\label{eq:R}(-\Delta)^{s}u=u^{\frac{n+2s}{n-2s}} \quad \mbox{ in }  \r^n\setminus\{0\},\quad \lim_{x\to 0} u(x) = +\infty.\end{equation}
Here $s \in (0,1)$, $n >2s$ and $(-\Delta)^s$ stands for the fractional laplacian, i.e., 
the integro-differential operator defined by \[(-\Delta)^{s} u(x)=c_{n,s} \, \text{P.V.} {\int_{\R^n}}\frac{u(x)-u(y)}{|x-y|^{n+2s}}\,dy,
\qquad
c_{n,s}
=\dfrac{
	2^{2s}
	\Gamma(\frac{n+2s}{2})
}{
	\Gamma(2-s)
	\pi^{\frac{n}{2}}
}s(1-s).
\]
Solutions to \eqref{eq:R} give rise to metrics $g_u:=u^{\frac{4}{n-2s}}|dx|^2$
which have constant fractional curvature. According to \cite{CJSX} (see also \cite{CheLiOu}), all singular solutions for this equation are radially symmetric and satisfy:

\begin{equation} \label{bound} c_1|x|^{-\frac{n-2s}{2}}\leq u(x)\leq c_2|x|^{-\frac{n-2s}{2}}, \ 0 < c_1 <c_2. \end{equation} Moreover we have the following explicit solution, see \cite[Proposition 2.7]{DG}:
\begin{equation} \label{singsol} u_0(x)=\kappa^{-1}_{n,s}|x|^{\frac{-(n-2s)}{2}}, \text{ where } \kappa_{n,s}=2^{2s}\left(\frac{\Gamma(\frac{n+2s}{4})}{\Gamma(\frac{n-2s}{4})}\right)^2. \end{equation}

The main scope of this paper is to obtain qualitative properties of the set of solutions of \eqref{eq:R}. Our first result is the following.
\begin{theorem}\label{intersect1}
	Let $u\in C^2_{loc}(\r^n \setminus \{0\})$ be any solution of \eqref{eq:R}. If $u(x) \geq u_0(x)$ or $u(x) \leq u_0(x)$ for all $x \in \r^n \setminus \{0\}$, then $u(x) =u_0(x)$.
\end{theorem}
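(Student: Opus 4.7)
The plan is to reduce \eqref{eq:R} to a one-dimensional autonomous nonlocal equation via the Emden--Fowler transformation, and then to conclude through a strong maximum principle argument combined with a blow-up/translation argument. By the symmetry result of \cite{CJSX}, any solution $u$ of \eqref{eq:R} is radial, so one can write $u(x)=|x|^{-(n-2s)/2}\,v(-\log|x|)$, where by \eqref{bound} the function $v\colon\R\to[c_1,c_2]$ is bounded and positive. It solves a self-adjoint autonomous nonlocal equation $\mathcal{L}_s v=v^p$ on $\R$, where $\mathcal{L}_s$ has positive integral kernel $K$ and acts on constants as multiplication by $v_0^{p-1}$, so that $\mathcal{L}_s v_0=v_0^p$. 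The explicit solution $u_0$ corresponds to the constant $v\equiv v_0:=\kappa_{n,s}^{-1}$, and the hypothesis $u\geq u_0$ (resp.\ $u\leq u_0$) translates to $v\geq v_0$ (resp.\ $v\leq v_0$).

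Suppose $v\geq v_0$ and set $\mu_{\ast}:=\inf_{t\in\R}v(t)/v_0\geq 1$. If $\mu_{\ast}$ is attained at some $t_{\ast}\in\R$, I would set $\psi:=v-\mu_{\ast}v_0\geq 0$, which vanishes at $t_{\ast}$, and evaluate $\mathcal{L}_s\psi(t_{\ast})$ in two complementary ways: via the integral representation, $\mathcal{L}_s\psi(t_{\ast})=-\int_{\R}K(t_{\ast}-\tau)\psi(\tau)\,d\tau\leq 0$ (since $K>0$ and $\psi\geq 0$), and via the equation, $\mathcal{L}_s\psi(t_{\ast})=v(t_{\ast})^p-\mu_{\ast}v_0^p=(\mu_{\ast}^p-\mu_{\ast})v_0^p\geq 0$ (since $\mu_{\ast}\geq 1$ and $p>1$). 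Both expressions must agree, which simultaneously forces $\mu_{\ast}=1$ and $\psi\equiv 0$, hence $v\equiv v_0$.

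If $\mu_{\ast}$ is not attained, I would pick $t_n\to\pm\infty$ with $v(t_n)\to\mu_{\ast}v_0$. By translation invariance of the equation together with standard uniform regularity estimates for bounded solutions of \eqref{eq:R}, Arzel\`a--Ascoli provides a subsequential locally uniform limit $\tilde v$ of the translates $v(\cdot+t_n)$, itself a bounded positive solution with $\tilde v\geq\mu_{\ast}v_0$ and $\tilde v(0)=\mu_{\ast}v_0$; applying the previous argument to $\tilde v$ gives $\tilde v\equiv v_0$ and $\mu_{\ast}=1$. The main obstacle is to upgrade this to the global identity $v\equiv v_0$ when the infimum is only approached at infinity; the plan is to invoke the classification of bounded positive solutions of the autonomous nonlocal ODE---in analogy with the classical Fowler analysis, each such solution is either the constant $v_0$ or a nontrivial periodic orbit oscillating about $v_0$ with $\min v<v_0<\max v$---so that $v\geq v_0$ excludes the periodic alternative and forces $v\equiv v_0$. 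The case $u\leq u_0$ is handled symmetrically via $\mu^{\ast}:=\sup_t v(t)/v_0\leq 1$.
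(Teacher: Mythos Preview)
Your Case~1 argument (infimum attained) is correct and elegant: evaluating the nonlocal operator at the touching point yields the conclusion directly via the strong maximum principle. The difficulty is entirely in Case~2, and there your proposal has a genuine gap.

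When the infimum $\mu_\ast$ is not attained, your translation--compactness argument correctly produces a limit $\tilde v$ with $\tilde v\equiv v_0$ and hence $\mu_\ast=1$. But this only tells you that $\inf v = v_0$; it does \emph{not} yield $v\equiv v_0$. To close the gap you appeal to a classification of bounded positive solutions of the one-dimensional nonlocal equation, asserting that any such solution is either the constant $v_0$ or a periodic orbit oscillating strictly about $v_0$. This is precisely the statement that is \emph{not known} in the fractional setting: periodicity of all bounded solutions of \eqref{eq:cylinder} is, as the paper itself remarks, a major open problem. So the step ``$v\geq v_0$ excludes the periodic alternative and forces $v\equiv v_0$'' is circular: it assumes a classification that is at least as strong as the theorem you are trying to prove.

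The paper avoids this issue by a different mechanism. Rather than sliding the constant $v_0$ from below (which may fail to touch), it slides a \emph{compactly supported} barrier---namely the first Dirichlet eigenfunction $\phi_1$ of $P$ on a large interval $[-M,M]$. Because $\phi_1$ has compact support and $v-1$ is bounded below by a positive constant (by the strong maximum principle), the scaled barrier $\alpha\phi_1$ automatically touches $v-1$ at some interior point $t_0\in(-M,M)$. The key analytic input is that $\lambda_1(M)\to 0$ as $M\to\infty$, so one may choose $M$ with $\lambda_1(M)<\tfrac{4s}{n-2s}$; combined with the differential inequality $P(v-1)\geq \tfrac{4s}{n-2s}(v-1)$ (a consequence of convexity of $t\mapsto t^p-t$), the function $w=v-1-\alpha\phi_1$ then satisfies $Pw\geq 0$ on $(-M,M)$, $w\geq 0$ globally, and $w(t_0)=0$, contradicting Proposition~\ref{mp}. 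The compact support of the barrier is exactly what makes the touching argument global without any limit procedure or structural classification.
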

We are also concerned with the Morse index computation of the singular solutions. In this regard, we prove the following theorem:

\begin{theorem}\label{th_morse_index}
	Let $u\in C^2_{loc}(\r^n \setminus \{0\})$ be any singular solution of \eqref{eq:R}. Then its Morse index is infinity. 
\end{theorem}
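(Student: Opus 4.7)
The plan is to exploit the Emden--Fowler transformation announced in the abstract to reduce the linearized problem at $u$ to a translation-invariant (or $T$-periodic) nonlocal problem on $\R$, and then use translation to manufacture arbitrarily many mutually orthogonal negative directions for the linearized quadratic form.

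\emph{Reduction.} By the radial symmetry result in \cite{CJSX}, I write $u(x) = |x|^{-(n-2s)/2}\, v(-\log|x|)$. Equation~\eqref{eq:R} transforms into a nonlocal 1D equation
\[
\mathcal{P}_s v(t) = v(t)^{p},\qquad p := \tfrac{n+2s}{n-2s},
\]
where $\mathcal{P}_s$ is translation-invariant with explicit convolution kernel and Fourier symbol (cf.~\cite{DG}). By~\eqref{bound}, $v$ is bounded between two positive constants; by Theorem~\ref{intersect1}, either $u = u_0$ and $v \equiv v_0$ is constant, or $v$ crosses $v_0$ and is expected to be $T$-periodic for some $T>0$, parallel to the local case. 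Restricting the Morse quadratic form of $u$ to radial test functions $\varphi(x) = |x|^{-(n-2s)/2}\,\psi(-\log|x|)$ produces
\[
\widetilde Q_v(\psi) \;=\; \langle \mathcal{P}_s \psi,\psi\rangle_{L^2(\R)} \;-\; p \int_\R v(t)^{p-1}\,\psi(t)^2\,dt,
\]
so it suffices to exhibit an infinite-dimensional subspace of $C_c^\infty(\R)$ on which $\widetilde Q_v$ is negative definite.

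\emph{One negative direction.} Multiply $\mathcal{P}_s v = v^p$ by $v$ and integrate over one period: $\int_0^T v\,\mathcal{P}_s v\,dt = \int_0^T v^{p+1}\,dt$. Hence the formal periodic version of $\widetilde Q_v$ evaluated at $\psi = v$ is $(1-p)\int_0^T v^{p+1}\,dt < 0$. To convert this into a compactly supported negative direction, I fix $\chi \in C_c^\infty(-1,1)$ with $\chi \equiv 1$ on $[-1/2,1/2]$ and set $\psi_0(t) := v(t)\,\chi(t/R)$ with $R$ a large integer multiple of $T$. The dominant contribution to $\widetilde Q_v(\psi_0)$ is of size $R\cdot T^{-1}(1-p)\int_0^T v^{p+1}\,dt < 0$, while the corrections arising from the commutator $[\mathcal{P}_s,\chi(\,\cdot/R)]$ and the long-range nonlocal tails across $\partial\,\supp\chi(\cdot/R)$ are of lower order in $R$, controlled by the $|t-\tau|^{-(1+2s)}$-decay of the 1D convolution kernel of $\mathcal{P}_s$ and the uniform bounds on $v$. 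Thus $\widetilde Q_v(\psi_0) < 0$ for $R$ large. (When $v \equiv v_0$, the same conclusion follows more simply from the Fourier symbol $\Theta_s(\xi) - p\,v_0^{p-1}$ of the translation-invariant operator, which at $\xi = 0$ equals $v_0^{p-1}(1-p) < 0$.)

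\emph{Many directions.} Given $N \in \N$, translate: define $\psi_j(t) := \psi_0(t - jM)$ for $j = 1,\dots,N$, with $M$ a large integer multiple of $T$ so that $\supp\psi_j$ are disjoint and $v(\cdot - jM) = v$ by periodicity. The Gram matrix $G_{jk} := \widetilde Q_v(\psi_j,\psi_k)$ has $G_{jj} = \widetilde Q_v(\psi_0) < 0$, whereas for $j \neq k$ the local potential part contributes $0$ (disjoint supports) and the nonlocal bilinear part is $O((|j-k|\,M)^{-(1+2s)})$. Choosing $M$ large makes $G$ diagonally dominant with negative diagonal, hence negative definite. Pulling back through the Emden--Fowler map produces $N$ radial test functions in $C_c^\infty(\R^n \setminus \{0\})$ on which the Morse quadratic form of $u$ is negative definite, so the Morse index of $u$ is at least $N$. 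As $N$ is arbitrary, it is infinite.

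\emph{Expected main obstacle.} The delicate analytic point is the ``bulk $+$ lower-order remainder'' decomposition of step two: quantitatively estimating the commutator $[\mathcal{P}_s,\chi(\cdot/R)]$ and the nonlocal boundary interactions in $\langle\mathcal{P}_s(v\chi(\cdot/R)), v\chi(\cdot/R)\rangle$ requires sharp kernel bounds for $\mathcal{P}_s$. A secondary issue is establishing full periodicity of $v$ for non-explicit singular solutions of~\eqref{eq:R}, beyond the bounds~\eqref{bound}; should only a recurrence property along some sequence $t_k \to \infty$ be available, the translation argument adapts by shifting $\psi_0$ along such a sequence instead of by multiples of an exact period.
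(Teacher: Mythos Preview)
Your argument rests on periodicity of the Emden--Fowler profile $v$: you integrate over ``one period'' to get the heuristic $(1-p)\int_0^T v^{p+1}$, choose $R$ a multiple of $T$, and then in the many-directions step invoke $v(\cdot-jM)=v$. But periodicity of $v$ is precisely the open problem the paper flags (see the paragraph after \eqref{boundedness}); nothing in \eqref{bound} or Theorem~\ref{intersect1} gives it, and the vaguely stated recurrence fallback is equally unestablished. As written this is a genuine gap. The paper avoids periodicity altogether through a dichotomy: if $v$ satisfies an Oscillation Condition (uniform sign changes of $v-1$ on every window of fixed length), the test function is $|v'|$ truncated between two critical points, and negativity of $Q_v$ comes from the identity $Q_v[\eta]=\int\!\!\int K(t-\tau)\bigl(v'(t)v'(\tau)-|v'(t)||v'(\tau)|\bigr)$; if the Oscillation Condition fails, translates of $v$ converge locally to the constant solution $1$ (here Theorem~\ref{intersect1} is used), and one inherits $ind(1)=+\infty$ by semicontinuity.

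That said, your choice of test function can be made to work \emph{without} periodicity, and then yields a route shorter than the paper's. The algebraic identity $(v\chi_R(t)-v\chi_R(\tau))^2-(v(t)\chi_R(t)^2-v(\tau)\chi_R(\tau)^2)(v(t)-v(\tau))=v(t)v(\tau)(\chi_R(t)-\chi_R(\tau))^2$, combined with $Pv=v^p-v$, gives exactly
\[
Q_v[v\chi_R]=(1-p)\int_\R \chi_R^2\,v^{p+1}\,dt+\tfrac12\int_\R\int_\R K(t-\tau)\,v(t)v(\tau)\,(\chi_R(t)-\chi_R(\tau))^2\,dt\,d\tau.
\]
The first term is $\le (1-p)\,c_1^{p+1}\!\int\chi_R^2\le -cR$ by the uniform lower bound in \eqref{bound}; the second is $\le c_2^2\,\mathcal T(\chi_R)=O(1/R)$ using $\|\chi_R'\|_\infty=O(1/R)$ and the exponential decay of $K$. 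Hence $Q_v[v\chi_R]\le -cR$ for $R$ large, with $c$ depending only on the constants in \eqref{bound}. For many directions, do \emph{not} translate $\psi_0$; set $\psi_j(t)=v(t)\,\chi((t-jL)/R)$ directly on disjoint windows. The same computation gives $Q_v[\psi_j]\le -cR$ uniformly in $j$, and the off-diagonal pairings are $O(K(L-2R))$, so diagonal dominance holds for $L$ large. This removes all appeal to periodicity or recurrence and bypasses the Oscillation-Condition dichotomy entirely.
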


The main motivation of Theorem \ref{th_morse_index} is the blow-up analysis of solutions to the fractional Yamabe problem. Indeed, in many circumstances, one tries to prove compactness of solutions via a contradiction argument. If the sequence of solutions is unbounded, a rescaling argument yields a solution posed in the euclidean space. Under certain hypotheses of the rescaling, the limit solution could be singular at the origin. If one has a bound on the Morse index of the blowing-up solutions, then the limit solution should have finite Morse index, and Theorem \ref{th_morse_index} would give a contradiction. 
	
%Moreover, we emphasize that this argument is useful not only when studying the fractional Yamabe problem, but also for the classical Yamabe problem with boundary. In particular, when the rescaling argument yields a solution posed in the upper half euclidean space with null scalar curvature in the interior and negative mean curvature on the boundary. 
%Indeed, let $X$ be a manifold with boundary $\partial X$ and let $\{u_j\}_{j\in\N}$ be a family of solutions to
%	\[
%	\left\{\begin{split}
%		&-\Delta u_j+R_nu_j=K_ju_j^{\frac{n+2s}{n-2s}}
%		\mbox{ in } X\\
%		& \frac{\partial u_j}{\partial \nu}=H_j u_j^{\frac{n}{n-2}} 	\mbox{ on }\partial X,
%	\end{split}
%	\right.
%\]
%where $K_j,\ H_j$ denotes the scalar and mean curvatures of $X$ and $\partial X$, respectively. When the limiting problem reads
%

The proofs of Theorems \ref{intersect1} and \ref{th_morse_index} follow at first the strategy of the local case; indeed, the estimate \eqref{bound} motivates the change of unknown $v(r)=\kappa_{n,s}r^{\frac{n-2s}{2}}u(r)$, with $r=|x|$. In polar coordinates $(r,\theta)\in (0,+\infty)\times \Sph^{n-1}$, the fractional Laplacian satisfies the following conformal property (see \cite{ChaGon, DG}):
\[\label{conformal_invariance}
	(-\Delta^s)\left(r^{-\frac{n-2s}{2}}v(r)\right)=r^{-\frac{n+2s}{2}}\left(P v(r)+v(r)\right),
\]
where
\[
	Pv(t):= \text{P.V.} \, \int_{\r}(v(t)-v(\tau))  K(t-\tau)\,d\tau,\]with 
\begin{equation}\label{eq:kernel} {K}(t)=	\gamma_{n,s} \,e^{-\frac{n+2s}{2}|t|}\Hyperg\big( \tfrac{n+2s}{2},1+s;\tfrac{n}{2};e^{-2|t|}\big), \ \gamma_{n, s} >0.
\end{equation}
Here $\Hyperg$ represents the hypergeometric function. In particular, we have the following asymptotic behavior for the kernel (see for instance \cite{survey, DDGW}):
\begin{equation}\label{beh_kernel}
	K(\xi)\asymp
	\begin{cases}
		|\xi|^{-1-2s} &\mbox{ as }|\xi|\to 0,\\
		e^{-\frac{n+2s}{2}|\xi|} &\mbox{ as }|\xi| \to +\infty.\\
	\end{cases}
\end{equation}
Via the Emden-Fowler change of variable $r= e^{-t}$ we are led now with the fractional problem:

\begin{equation}\label{eq:cylinder}
	Pv+v=v^{\frac{n+2s}{n-2s}},\quad  \text{ in } \r.
\end{equation}
 Moreover, by \eqref{bound}, the solution $v$ satisfy:

\begin{equation}\label{boundedness} 0< c_1 \leq v(t) \leq c_2, \quad \forall \ t \in \R. \end{equation}
Observe that the solution $v=1$ of \eqref{eq:cylinder} corresponds to the solution \eqref{singsol}. It is worth to point out, however, that the classical methods for ODE's cannot be applied here, and a “phase portrait” seems not possible (see \cite{DG, survey}).

Little is known about the qualitative properties of the solutions of \eqref{eq:cylinder}. Inspired by the classical case, one could conjecture that all such solutions $v$ are periodic in $t$: this is by now a major open problem. Periodic solutions have been constructed in \cite{DDGW} (see also \cite{ACDFGW,survey}). 

In order to prove Theorem \ref{intersect1} it suffices to show that any solution $v$ must intersect the constant solution 1. This is done by using the first eigenfunction of the operator $P$ in bounded large domains, and using a convenient comparison principle. 

The proof of Theorem \ref{th_morse_index} reduces to show that the Morse index of any solution $v$ of \eqref{eq:cylinder} is also infinity. In order to show this, we discuss two different cases. 

We say that a solution $v$ satisfies the Oscillation Condition if there exist constants $M>0$ and $\epsilon>0$ such that for any interval $I=[a,b]$ with length $|b-a|=M$, then:
\[ \max \{ v(t):\ t \in I\} > 1+\e \ \text{ and } \ \min\{ v(t): \ t \in I\} < 1- \e.\]
 
If the solution $v$ does not satisfy the Oscillation Condition, we prove that there exists a sequence $\tau_n \in \R$ such that $v(\cdot - \tau_n) \to 1$ in $C^k_{loc}$. The proof of  this fact uses Theorem \ref{intersect1}. Moreover, one can check that the solution $1$ has infinite Morse index, and in this way we conclude.

If, instead, the solution $v$ satisfies the Oscillation Condition, we prove that one can use the function $|v'|$, conveniently truncated, to make the quadratic form become negative. An iterative argument gives the existence of many such truncations, and this implies that the Morse index is infinity. 

The rest of the paper is organized as follows. In Section 2 we are concerned with Theorem \ref{intersect1}. For that, some preliminaries are in order. In particular, we need to consider the associated eigenvalue problem in bounded domains. In Section 3 we first give a definition of the Morse index associated to problem \eqref{eq:R}, and we show its relation with the Morse index of solutions to problem \eqref{eq:cylinder}. We then use this relation to prove that the latter is infinity.

 \section{Proof of Theorem \ref{intersect1}}
 
In this section we prove Theorem \ref{intersect1} by showing that any solution $v$ for \eqref{eq:cylinder} has to intersect the constant solution $v_0\equiv 1.$ To start with, let us recall the definition of the fractional Sobolev space $H^s(\Omega)$:

\[ H^s(\Omega)= \left\{u \in L^2(\Omega): \ \frac{|u(x)-u(y)|}{|x-y|^{s+n/2}} \in L^2(\Omega \times \Omega) \right\}, \ s \in (0,1),\]
equipped with the norm
\[\norm{u}_{H^s(\Omega)}=\left({\norm{u}_{L^2(\Omega)}^2}+\int_\Omega\int_\Omega\frac{\abs{u(x)-u(y)}^2}{\abs{x-y}^{n+2s}}\,dx\,dy\right)^\frac{1}{2}.\]
As has been anticipated in the introduction, most of the time we will consider Sobolev Spaces in dimension 1.

\begin{lemma} Let $I \subset \R$ be any interval, $u \in  L^{\infty}(\R)$ such that $u|_I \in C^{2}(I)$. Then $P u \in C(J)$ for any $J \subset \subset I$, where
	\[
		Pu(t):= \text{P. V. } \, \int_\R \left(u(t)-u(\tau)\right) K(t-\tau)d\tau, \ t \in J.
	\]
Here $K$ is given in \eqref{eq:kernel} (and hence satisfies \eqref{beh_kernel}).
		 
\end{lemma}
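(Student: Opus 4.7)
The plan is to split the integral at a small scale around $\tau = t$: write $Pu(t) = A(t) + B(t)$, where $A$ integrates over $|\tau - t| < \delta$ and $B$ over $|\tau - t| \geq \delta$, with $\delta > 0$ chosen small enough that the closed $\delta$-neighborhood of $J$ still lies inside $I$. This decomposition isolates the two regimes of \eqref{beh_kernel}: the near-diagonal singularity $|t-\tau|^{-1-2s}$, which must be tamed by a cancellation coming from $u(t) - u(\tau)$, and the far field where the exponential decay of $K$ combined with $u \in L^\infty(\R)$ makes everything trivially finite.

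For the near-diagonal term, the kernel $K$ is even by \eqref{eq:kernel}, so after substituting $\eta = \tau - t$ I would symmetrize the principal value integral as
\[
A(t) = \tfrac{1}{2}\int_{-\delta}^{\delta} \bigl(2u(t) - u(t+\eta) - u(t-\eta)\bigr)\, K(\eta)\, d\eta.
\]
Since $u \in C^{2}(I)$ and $t \pm \eta$ both lie in $I$ for $t \in J$ and $|\eta| < \delta$, a second-order Taylor expansion yields $|2u(t) - u(t+\eta) - u(t-\eta)| \leq \|u''\|_{L^\infty(I')}\,\eta^{2}$, where $I'$ denotes the closed $\delta$-neighborhood of $J$. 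Together with $K(\eta) \lesssim |\eta|^{-1-2s}$ near zero, this dominates the integrand by $C|\eta|^{1-2s}$, which is integrable on $(-\delta, \delta)$. The bound is uniform in $t \in J$, and the integrand depends continuously on $t$ for each fixed $\eta$, so dominated convergence gives $A \in C(J)$.

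For the far term, \eqref{beh_kernel} implies that $K$ is bounded on $\{|\eta| \geq \delta\}$ and exponentially integrable at infinity, so
\[
|B(t)| \leq 2\,\|u\|_{L^\infty(\R)}\int_{|\eta|\geq \delta} K(\eta)\, d\eta < \infty,
\]
again uniformly in $t \in J$. Since $u(t) - u(\tau)$ depends continuously on $t$ for each fixed $\tau$, a second application of dominated convergence yields $B \in C(J)$.

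The only delicate point is the principal-value cancellation near $\tau = t$: naively the integrand behaves like $|t - \tau|^{-2s}$, which fails to be absolutely integrable when $s \geq 1/2$. The evenness of $K$ turns this first-order behavior into a second-order one, producing an absolutely convergent integral for every $s \in (0, 1)$; once this is observed, the remainder is a routine application of dominated convergence.
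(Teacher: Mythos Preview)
Your argument is correct and is precisely the standard one the paper has in mind: the authors do not write out a proof at all, stating only that it ``is standard, using the asymptotic properties of the kernel $K$ (see \eqref{beh_kernel}), and is left to the reader.'' Your near/far decomposition, symmetrization via the evenness of $K$, and the two dominated-convergence arguments constitute exactly that standard proof.
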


The proof of this lemma is standard, using the asymptotic properties of the kernel $K$ (see \eqref{beh_kernel}), and is left to the reader.

The operator $P$ satisfies a maximum principle, which is stated and proven below for the sake of completeness.

\begin{proposition}[Maximum Principle] \label{mp} Let $I \subset \R$ be an open interval, $u \in  L^{\infty}(\R)$ such that $u|_I \in C^{2}(I)$. Assume that $u(t) \geq 0$ for all $t \in \R$, and $P u(t) \geq 0$ for any $t \in I$. Then, either $u(t) >0$ for all $t \in I$ or $u(t) = 0$ a.e. $t\in \R$. \end{proposition}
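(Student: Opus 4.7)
The plan is to run the standard ``touching at a minimum'' argument for nonlocal operators. Suppose the first alternative fails: there exists $t_0 \in I$ with $u(t_0)=0$. Since $u \geq 0$ on all of $\R$, the point $t_0$ is a global minimum; and because $u|_I \in C^2(I)$, we have $u'(t_0)=0$, $u''(t_0)\geq 0$, and in particular $u(\tau) = u(\tau)-u(t_0) = O((\tau-t_0)^2)$ as $\tau \to t_0$.

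Next I would evaluate $Pu(t_0)$ explicitly. Since $u(t_0)=0$, one has formally
\[
Pu(t_0) = -\,\text{P.V.}\int_\R u(\tau)\,K(t_0-\tau)\,d\tau.
\]
Using the asymptotics \eqref{beh_kernel}, the singularity of $K$ at the origin is of order $|\xi|^{-1-2s}$, while the quadratic vanishing of $u$ at $t_0$ absorbs this singularity and makes the integrand absolutely integrable in a neighborhood of $t_0$; far from $t_0$, the integrand is controlled by $\|u\|_{L^\infty}$ against the exponentially decaying tail of $K$. Hence the principal value coincides with the ordinary integral, and
\[
Pu(t_0) = -\int_\R u(\tau)\,K(t_0-\tau)\,d\tau \leq 0,
\]
because $u\geq 0$ and $K>0$ pointwise (positivity of $K$ follows from its explicit hypergeometric expression together with \eqref{beh_kernel}).

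Combining this with the hypothesis $Pu(t_0)\geq 0$ gives
\[
\int_\R u(\tau)\,K(t_0-\tau)\,d\tau = 0.
\]
Since the integrand is non-negative and $K(t_0-\tau)>0$ for a.e.\ $\tau\in\R$, we conclude $u(\tau)=0$ for a.e.\ $\tau\in\R$, i.e.\ the second alternative holds. This dichotomy proves the proposition.

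There is no serious obstacle; the only point requiring a little care is the justification that $\int_\R u(\tau)K(t_0-\tau)\,d\tau$ converges as a Lebesgue integral (so that dropping the principal value is legitimate), and this follows from the $C^2$ regularity of $u$ at the minimum combined with the kernel bounds \eqref{beh_kernel}. Everything else is bookkeeping.
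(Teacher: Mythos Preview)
Your argument is correct and follows essentially the same route as the paper: evaluate $Pu$ at an interior zero $t_0$ (a global minimum), use $u(t_0)=0$ and positivity of $K$ to conclude the integral is nonpositive, and combine with the hypothesis $Pu(t_0)\geq 0$. You are in fact a bit more careful than the paper, which jumps directly to the strict inequality $Pu(t_0)<0$ under the assumption $u\not\equiv 0$; your explicit justification that the principal value is an ordinary Lebesgue integral (via the quadratic vanishing at $t_0$ and the kernel bounds) is a welcome clarification.
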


\begin{proof}
Assume $u \geq 0$, $u$ not identically equal to $0$, and assume that for some $t_0 \in I$, $u(t_0) =0$. Then,

$$P u(t_0)= P. V. \, \int_{\R} K(t_0-\tau) (u(t_0) - u(\tau)) \, d \tau= P. V. \, \int_{\R} K(t_0-\tau) (-u(\tau)) \, d \tau <0. $$
This proves the result.
\end{proof}

As anticipated in the introduction, the first eigenvalue of $P$ on bounded intervals will play a major role in the proof of Theorem \ref{intersect1}. On that purpose, we introduce the quadratic form
\begin{equation}\label{eq:T}
\mathcal{T}(v)=\frac{1}{2}\int_{\R} \int_{\R} K(t-\tau)(v(t)-v(\tau))^2\, dt \, d\tau,
\end{equation}
defined for every compactly supported $v\in H^{s}(\R)$. 
\begin{definition}\label{def-eigenvalue} Given any $M>0$, we define \textit{the first eigenvalue of $P$ in $[-M,M]$} as
\[
\lambda_1(M)=\inf \left\lbrace\frac{\mathcal{T}(v)}{\int_\R v(t)^2\,dt}: \ v \in H^s(\r) \setminus \{0\},\: \supp v\subset[-M,M]\right\rbrace.
\]
\end{definition}
The properties of $\lambda_1(M)$ that will be needed later are collected in the following lemma:
\begin{lemma}\label{eigenvalues}The following assertions hold true:
	\begin{enumerate}
		\item[(i)] $\lambda_1(M)>0$ for every $M>0$.
		\item[(ii)] There exists a continuous, nonnegative function $\phi_1\in H^{s}(\R)$ such that $\supp \phi_1\subset [-M,M]$, $\phi_1(t)>0$ for every $t\in(-M,M)$ and
		\[
		\frac{\mathcal{T}(\phi_1)}{\int_\R \phi_1(t)^2dt} = \lambda_1(M).
		\]
		Moreover, $\phi_1 \in C^{\infty}(-M,M)$ and it satisfies 
		\begin{equation}\label{phi1}
		P \phi_1=\lambda_1 \phi_1 \hsp\text{in}\hsp (-M,M).
		\end{equation}
		\item[(iii)] $	\lambda_1(M)\to 0$, as   $M\to +\infty$. 
		\end{enumerate}
	\end{lemma}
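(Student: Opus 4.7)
The plan is to treat the three items in sequence, combining the direct method of the calculus of variations with the maximum principle of Proposition \ref{mp}. For (i), I would show that on functions $v \in H^s(\R)$ with $\supp v \subset [-M,M]$ the quadratic form $\mathcal{T}(v)$ controls a constant multiple of $\|v\|_{L^2}^2$. From the asymptotics \eqref{beh_kernel} together with the continuity and positivity of $K$ away from the origin, there exists $c(M) > 0$ with $K(\xi) \geq c(M)|\xi|^{-1-2s}$ for all $|\xi| \leq 2M$. Discarding the nonnegative contribution of pairs with $\tau \in \R \setminus [-M,M]$ in $\mathcal{T}(v)$, this yields $\mathcal{T}(v) \geq c(M) [v]_{H^s([-M,M])}^2$. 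A standard fractional Poincar\'e inequality (equivalently, the Rellich--Kondrachov compact embedding $H^s_0(-M,M) \hookrightarrow L^2$) then gives $\|v\|_{L^2}^2 \leq C(M) [v]_{H^s([-M,M])}^2$, so $\lambda_1(M) > 0$.

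For (ii), I would obtain a minimizer by the direct method: pick a minimizing sequence $(v_k)$ normalized in $L^2$ with $\supp v_k \subset [-M,M]$. By (i), it is bounded in $H^s(\R)$, so after extracting a subsequence $v_k \rightharpoonup \phi_1$ weakly in $H^s$ and strongly in $L^2$, with $\|\phi_1\|_{L^2} = 1$ and $\supp \phi_1 \subset [-M,M]$. Weak lower semicontinuity of $\mathcal{T}$ forces $\phi_1$ to be a minimizer. Since $(|a|-|b|)^2 \leq (a-b)^2$, I may replace $\phi_1$ by $|\phi_1|$ and assume $\phi_1 \geq 0$. Varying the Rayleigh quotient against test functions in $C_c^\infty(-M,M)$ produces the Euler--Lagrange equation \eqref{phi1} in the weak sense; interior smoothness then follows by bootstrap from standard regularity for nonlocal operators whose kernel matches that of $(-\Delta)^s$ near the diagonal. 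Strict positivity $\phi_1 > 0$ on $(-M,M)$ is immediate from Proposition \ref{mp} applied with $P\phi_1 = \lambda_1 \phi_1 \geq 0$ and $\phi_1 \not\equiv 0$. Global continuity reduces to continuity at $\pm M$, which I would handle via a barrier argument yielding a boundary decay of the form $(M-|t|)_+^s$.

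For (iii), I would use an explicit test function. Fix $\psi \in C_c^\infty(-1,1)$ with $\psi \not\equiv 0$ and set $\psi_M(t) = \psi(t/M)$, so that $\supp \psi_M \subset [-M,M]$ and $\|\psi_M\|_{L^2}^2 = M\|\psi\|_{L^2}^2$. Writing $\mathcal{T}(\psi_M)$ in the variable $\xi = t - \tau$ and using the elementary bound $(\psi_M(\tau+\xi) - \psi_M(\tau))^2 \leq C\min(1,\xi^2/M^2)$, I split into $|\xi| \leq M$ (where the smooth bound together with $\int_\R \xi^2 K(\xi)\,d\xi < \infty$, a consequence of \eqref{beh_kernel}, gives a contribution of size $C/M$) and $|\xi| > M$ (where the exponential decay of $K$ makes the contribution exponentially small in $M$). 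Thus $\mathcal{T}(\psi_M) \leq C/M$, hence $\lambda_1(M) \leq \mathcal{T}(\psi_M)/\|\psi_M\|_{L^2}^2 \leq C/M^2 \to 0$.

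The step I expect to be most delicate is the global continuity of $\phi_1$ in (ii): Proposition \ref{mp} gives strict interior positivity but says nothing at the endpoints, so H\"older regularity up to $\pm M$ must be produced via a suitable barrier for $P$, which requires slightly more work than in the local case. The remaining arguments are routine adaptations of the direct method and a scaling/test function construction.
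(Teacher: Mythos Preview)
Your overall strategy for (ii) matches the paper's almost exactly: direct method, replacement by $|\phi_1|$, Euler--Lagrange, maximum principle for strict positivity, and regularity by bootstrap. The paper obtains global continuity by citing known boundary regularity for such nonlocal Dirichlet problems rather than building a barrier, but your plan is fine.

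There is, however, a genuine gap in your argument for (i). After discarding the contribution from $\tau\in\R\setminus[-M,M]$, you invoke a Poincar\'e inequality of the form $\|v\|_{L^2}^2\le C(M)[v]^2_{H^s([-M,M])}$ for $v$ supported in $[-M,M]$. This fails when $s<\tfrac12$: the function $v=\chi_{[-M,M]}$ lies in $H^s(\R)$, has $[v]_{H^s([-M,M])}=0$ (being constant on $[-M,M]$), yet $\|v\|_{L^2}^2=2M>0$. The compact embedding you cite does not rescue this, because on functions supported in $[-M,M]$ the \emph{interior} seminorm alone does not detect the jump at the endpoints. Ironically, the piece you discarded is exactly what works: for $v$ supported in $[-M,M]$,
\[
\mathcal T(v)\ \ge\ \int_{-M}^{M} v(t)^2\Big(\int_{\R\setminus[-M,M]}K(t-\tau)\,d\tau\Big)\,dt\ \ge\ c(M)\,\|v\|_{L^2}^2,
\]
since the inner integral is continuous and strictly positive on $[-M,M]$. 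The paper avoids the issue altogether by first producing a minimizer $\phi_1$ (boundedness in $H^s$ needs only $\mathcal T(v_k)$ and $\|v_k\|_{L^2}$ bounded) and then observing that $\lambda_1(M)=0$ would force $\mathcal T(\phi_1)=0$, hence $\phi_1$ constant on all of $\R$, hence $\phi_1\equiv 0$, contradicting the normalization.

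For (iii) your approach is correct but different from the paper's. You test with a rescaled bump and estimate $\mathcal T(\psi_M)$ directly using $\int_\R \xi^2 K(\xi)\,d\xi<\infty$ and the exponential tail of $K$, obtaining $\lambda_1(M)\le C/M^2$. The paper instead uses the global bound $K(\xi)\le C|\xi|^{-1-2s}$ to compare $\lambda_1(M)$ with the first Dirichlet eigenvalue $\mu_1(M)$ of $(-\Delta)^s$ on $(-M,M)$, and then reads off $\mu_1(M)=\mu_1(1)M^{-2s}$ by exact scaling. Their route is shorter and gives the sharper rate $M^{-2s}$ without any kernel splitting; yours is more self-contained in that it avoids invoking the fractional Laplacian eigenvalue.
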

\begin{proof}

Observe that, by definition, $\lambda_1(M) \geq 0$. We first prove that $\lambda_1(M)$ is achieved by a nonnegative function $\phi_1$ for every $ M>0$. Take $v_k \in H^s(\r)$ with $\supp v_k \in [-M,M]$ such that $\int_\R v_k(t)^2\, dt =1$ and $\mathcal{T}(v_k) \to \lambda_1(M)$. By \eqref{beh_kernel}, there exists a constant $C_1=C_1(M)$ such that 
\begin{equation}\label{kernel-bound}
K(\xi) \geq C_1 |\xi|^{-1-2s} \hsp\text{for}\hsp |\xi| \leq M. 
\end{equation}
Using \eqref{kernel-bound} and the expression for $\mathcal{T}$ given in \eqref{eq:T},
\[
\mathcal{T}(v_k) + \norm{v_k}_{L^2(\R)}  \geq \min \{ C_1, 1 \}\| v_k \|^2_{H^s(\R)}.
\]
Therefore, $v_k$ is bounded in $H^s(\r)$ and up to taking a subsequence we can assume that $v_k \rightharpoonup \phi_1$ weakly in $H^s(\R)$ and strongly in $L^2(\R)$ (see \cite[Theorem 7.1]{hitchhiker}). A standard minimization argument permits us to conclude that $\phi_1$ is a minimizer. With this in mind, if $\lambda_1(M)=0$, then $\phi_1$ would be constantly equal to $0$, contradicting $\norm{\phi_1}_{L^2(\R)}=1$.

\medskip 

Now, testing the quotient with $\abs{\phi_1},$ we see that
\[\mathcal{T}(\abs{\phi_1})=\frac12\int_\R K(t-\tau )(|\phi_1(t)| - |\phi_1(\tau)|)^2d\tau dt \leq  \frac12\int_\R K(t-\tau)(\phi_1(t) - \phi_1(\tau))^2d\tau dt = \mathcal{T}(\phi_1).\]
Thus, we can assume that $\phi_1\geq 0$. \eqref{phi1} is the Euler-Lagrange equation for the functional $\mathcal{T}$ under the constraint $\int_\R v^2 = 1$, and the maximum principle (Proposition \ref{mp}) gives us $\phi_1(t)>0$ for all $t\in(-M,M)$. The regularity of $\phi_1$ is known (see \cite[Section $6.1$]{surveyRO} and the references therein), being $\phi_1\in C^0(\R)\cap C^1(-M,M)$. A bootstrap argument yields interior $C^{\infty}$ regularity.
\bigskip

Finally, we show that $\lambda_1(M)\to 0$, as $M\to +\infty.$ By \eqref{beh_kernel}, the following bound holds:
\[ K(\xi) \leq C |\xi|^{-1-2s} \hsp \text{for every } \xi \in \R.\] Then,
\begin{equation} \label{casi} \lambda_1(M) \leq C \mu_1(M)/2, \end{equation}
where:
\[
	\mu_1(M):=\inf \left\{\displaystyle \frac{\int_\R \int_\R (v(t) - v(\tau))^2 |t-\tau|^{-1-2s}\, dt \, d\tau}{\int_\R v(t)^2\,dt}: \ v \in H^s(\R) \setminus \{0\}, \   \text{ supp }v\subset[-M,M] \right\}.
\] 
It is clear that $\mu_1(M)$ is decreasing in $M$. Moreover, by the scaling properties of the terms in the definition of $\mu_1(M)$, we have that:

$$ \mu_1(M)= \mu_1(1) M^{-2s}.$$
From this inequality and \eqref{casi}, the claimed result follows.
	\end{proof}

We are now ready to prove Theorem \ref{intersect1}, which is an immediate consequence of the next result:

\begin{proposition} \label{prop1} Let $v$ a solution of \eqref{eq:cylinder} satisfying \eqref{boundedness}. Assume that $v(t) \geq 1$ (or $v(t) \leq 1$) for all $t \in \r$. Then $v(t)\equiv1$.
\end{proposition}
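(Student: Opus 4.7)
By the Emden--Fowler reduction already recalled in the introduction, it suffices to prove that any solution $v$ of \eqref{eq:cylinder} satisfying \eqref{boundedness} with $v \geq 1$ (resp. $v \leq 1$) is identically $1$. Set $w := v - 1$ in the first case and $w := 1 - v$ in the second, so that $w \geq 0$ on $\R$. Since $P$ annihilates constants,
\[ Pw = \pm (v^p - v), \qquad p := \tfrac{n+2s}{n-2s} > 1. \]
The strategy is to derive a pointwise inequality $Pw \geq \delta\, w$ for some $\delta > 0$, test it against the first eigenfunction of $P$ on $[-M,M]$, and exploit Lemma~\ref{eigenvalues}(iii) to reach a contradiction unless $w \equiv 0$.

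The first step is the inequality $Pw \geq \delta\, w$. When $v \geq 1$, convexity of $t \mapsto t^p$ yields $v^p - 1 \geq p(v-1)$, whence $Pw = v^p - v \geq (p-1)\, w$ and we may take $\delta = p-1$. When $v \leq 1$, the quotient $F(v) := (v - v^p)/(1-v)$, extended continuously to $F(1)= p-1$, is strictly positive on the compact interval $[c_1, 1]$ (with $c_1>0$ from \eqref{boundedness}), so it admits a positive minimum $\delta$, and $Pw = v - v^p \geq \delta\, w$.

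The second step uses the first eigenfunction $\phi_1 = \phi_1^{(M)}$ provided by Lemma~\ref{eigenvalues}. By the symmetric bilinear form associated with $K$ (whose use is justified because $\phi_1$ has compact support and $w \in L^\infty(\R)$),
\[ \int_\R \phi_1 \, Pw \, dt = \int_\R w \, P\phi_1 \, dt. \]
By \eqref{phi1}, $P\phi_1 = \lambda_1(M)\,\phi_1$ on $(-M, M)$. For $|t| > M$ the function $\phi_1$ vanishes, and the nonlocality of $P$ yields
\[ P\phi_1(t) = -\int_{-M}^{M} \phi_1(\tau)\, K(t-\tau)\, d\tau \;\leq\; 0, \]
since $\phi_1 \geq 0$ and $K > 0$. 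Because $w \geq 0$, the contribution from $\{|t|>M\}$ is non-positive, so
\[ \int_\R \phi_1 \, Pw \, dt \;\leq\; \lambda_1(M) \int_\R w\, \phi_1 \, dt. \]
Combining this with $Pw \geq \delta\, w$ tested against the nonnegative function $\phi_1$ produces
\[ (\delta - \lambda_1(M)) \int_\R w\, \phi_1 \, dt \;\leq\; 0. \]

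By Lemma~\ref{eigenvalues}(iii), we may pick $M$ so large that $\lambda_1(M) < \delta$; then $\int_\R w\, \phi_1 \, dt = 0$, and since $\phi_1 > 0$ on $(-M, M)$ we conclude $w \equiv 0$ on $(-M,M)$. Letting $M \to +\infty$ gives $w \equiv 0$ on $\R$, i.e.\ $v \equiv 1$. The main delicate point is the boundary/tail analysis of $P\phi_1$ outside its support: although one might naively hope for the clean identity $\int \phi_1 Pw = \lambda_1 \int \phi_1 w$, nonlocality produces an extra term whose sign must be controlled; fortunately, the positivity of $K$, $\phi_1$, and $w$ conspires to push it in the favourable direction.
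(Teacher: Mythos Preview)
Your argument is correct and rests on the same two ingredients as the paper's proof---the pointwise differential inequality $Pw\ge\delta\,w$ (with $\delta=\tfrac{4s}{n-2s}$ when $v\ge1$, and a constant coming from \eqref{boundedness} when $v\le1$) together with Lemma~\ref{eigenvalues}(iii)---but you combine them in a genuinely different way. The paper uses a pointwise \emph{touching} comparison: assuming $w>0$ (via Proposition~\ref{mp}), it takes $\alpha=\big(\sup_{(-M,M)}\phi_1/w\big)^{-1}>0$ so that $w-\alpha\phi_1\ge0$ with an interior zero, then checks that $P(w-\alpha\phi_1)\ge\lambda_1(M)(w-\alpha\phi_1)\ge0$ on $(-M,M)$ and invokes the strong maximum principle to reach a contradiction. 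You instead integrate the inequality against $\phi_1$ and use self-adjointness plus the sign of the nonlocal tail of $P\phi_1$. What you gain is a clean variational argument that avoids the touching step and the preliminary appeal to Proposition~\ref{mp}; what you pay is the justification of the identity $\int\phi_1\,Pw=\int w\,P\phi_1$. This is not quite as innocent as your parenthetical remark suggests: $\phi_1$ is only $C^\infty$ in the open interval and behaves like $\operatorname{dist}(t,\{\pm M\})^{s}$ at the endpoints, so that $P\phi_1$ has an integrable $|t\mp M|^{-s}$ singularity just outside $[-M,M]$. The identity does hold---for instance, mollify $\phi_1$ and use that $P$ commutes with convolution, or note that both sides equal the absolutely convergent bilinear form $\tfrac12\iint(\phi_1(t)-\phi_1(\tau))(w(t)-w(\tau))K$---but it deserves a line of explanation. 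The paper's route sidesteps this entirely by never integrating by parts.
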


\begin{proof}
Assume first that $v\geq 1$. Observe that 
\begin{equation}\label{claimgeq1}
	 P (v-1) \geq \frac{4s}{n-2s}(v-1) \geq 0 \hsp\text{in } \R.
\end{equation}
This is a consequence of the concavity of the function $t\to t^{\frac{n+2s}{n-2s}}-t$ together with the fact that $ P 1 =0$.  By Proposition \ref{mp} we have that either $v=1$ or $v-1>0$. Let us see that the latter possibility yields a contradiction. 

By Lemma \ref{eigenvalues}, (3), it is possible to find $M>0$ such that $\lambda_1(M)<\frac{4s}{n-2s}$. We will denote by $\phi_1$ the corresponding eigenfunction, satisfying 
\begin{equation}\label{rel01}
 P\phi_1=\lambda_1(M)\phi_1 \hsp \text{in } (-M,M).
\end{equation}
Let us recall that $\phi_1 \in H^s(\R) \cap C^0(\R)$, $supp \ \phi_1 = [-M,M]$ and $\phi_1(t)>0$ for all $t \in (-M, M)$.
Then, we can define $\alpha >0$ as:

$$ 0< \sup \left \{ \frac{\phi_1(x)}{v(x) - 1}, \ x \in \R \right \} = \max \left \{ \frac{\phi_1(x)}{v(x) - 1}, \ x \in \R \right \}:= 1/\alpha.$$

By continuity, we have that $w :=  v- 1 - \alpha \phi_1 \geq 0$ but there exists $t_0 \in (-M,M)$ such that $w(t_0)=0$. We can now combine \eqref{rel01} with \eqref{claimgeq1} to conclude that, for any $t \in (-M,M)$, 

$$ P w (t)  \geq \frac{4s}{n-2s}(v-1) - \lambda_1(M) \alpha \phi_1 \geq \lambda_1(M) w(t) \geq 0.$$
But this is a contradiction with Proposition \ref{mp}, and we conclude.

Now, let us consider the case $v\leq 1$. Again by concavity, and taking into account \eqref{boundedness}, we observe that there exists $c>0$ such that \[v^{\frac{n+2s}{n-2s}}-v\leq c(v-1).\] Hence, \[ P(1-v)\geq c(1-v),\] and we can argue analogously as above by taking $M$ such that $\lambda_1(M)<c.$
\end{proof}

\section{Proof of Theorem \ref{th_morse_index}}

This section addresses the proof of Theorem \ref{th_morse_index}. First of all, we will specify the definition of the Morse Index in our setting. We will see that some of its fundamental properties carry over naturally to the nonlocal case, others, such as its behavior when passing to the limit, require more attention. 

 \begin{definition}\label{def:morse} Given $u$ a solution of \eqref{eq:R}, we define the associated quadratic form as: 	\begin{equation}\label{quadrn}
		\mathscr{Q}_u[\psi] = \frac{1}{c_{n,s}}\int_{\R^{n}\backslash\{0\}}\psi(-\Delta)^s\psi - \frac{n+2s}{n-2s}\int_{\R^n\backslash\{0\}}u^\frac{4s}{n-2s}\psi^2,
	\end{equation}
	which is well defined, at least, for functions $\psi \in C_0^{\infty}(\r^n \setminus \{0\})$. We define the Morse index of the solution $u$ as follows:
	\[\text{Ind}(u) = \sup \left\lbrace \dim(E): \ E \subset C_0^{\infty}(\r^n \setminus \{0\}) \mbox{ a vector space such that } \mathscr{Q}_u|_E \mbox{ is negative definite} \right\rbrace.\]
\end{definition}

Our intention is to show that the Morse index $Ind(u)$ of any solution $u$ of \eqref{eq:R} is infinity. To show this, it suffices to consider subspaces $E$ of radially symmetric functions of arbitrary dimension where $\mathscr{Q}_u|_E$ is negative definite. In this particular setting, we can work with the following alternative quadratic form:

\begin{lemma}\label{lemmaquad} Let $u(x)$ be a (radially symmetric, thanks to  \cite{CJSX}) solution of \eqref{eq:R}, and $\psi\in C^\infty_0(\R^n\backslash\{0\})$ a radially symmetric function. Then,
	\[
			\mathscr{Q}_u[\psi ]= \abs{\Sph^{n-1}}Q_v[\phi],
	\]
	where $v(t)= r^{\frac{n-2s}{2} } u(r)$, $\phi(t)=  r^{\frac{n-2s}{2} } \psi(r)$, $r=e^{-t}$ and
	\begin{equation}\label{radialquad}
		Q_v[\phi]= \frac 1 2 \int_\R \int_\R \left(\phi(t)-\phi(\tau)\right)^2K(t-\tau)\, d\tau dt+\int_\R\left(1-\frac{n+2s}{n-2s}v(t)^\frac{4s}{n-2s}\right)\phi(t)^2dt.
	\end{equation}

\end{lemma}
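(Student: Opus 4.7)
The plan is to decompose $\mathscr{Q}_u[\psi]$ into its nonlocal piece and its potential piece, and transform each via polar coordinates combined with the Emden--Fowler substitution $r = e^{-t}$. The nonlocal piece is where the conformal invariance formula for $(-\Delta)^s$ recalled in the introduction does the work; the potential piece is essentially bookkeeping of weights.

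For the potential term, I would write, by radial symmetry,
$$\int_{\R^n\setminus\{0\}} u^{\frac{4s}{n-2s}} \psi^2\, dx = |\Sph^{n-1}|\int_0^\infty u(r)^{\frac{4s}{n-2s}} \psi(r)^2 r^{n-1}\, dr,$$
substitute $u(r) = r^{-(n-2s)/2} v(t)$ and $\psi(r) = r^{-(n-2s)/2} \phi(t)$ with $t = -\log r$, and observe that the powers of $r$ collapse precisely to $r^{-1}\,dr = -dt$. This yields $|\Sph^{n-1}|\int_\R v(t)^{\frac{4s}{n-2s}}\phi(t)^2\, dt$. For the nonlocal term, the conformal invariance identity gives $(-\Delta)^s\psi(x) = r^{-(n+2s)/2}(P\phi + \phi)(t)$, so that $\psi(x)(-\Delta)^s\psi(x) = r^{-n}\phi(t)(P\phi + \phi)(t)$; once more $r^{-n}\cdot r^{n-1}\, dr = -dt$, and the same polar change of variable plus Emden--Fowler substitution produces
$$\frac{1}{c_{n,s}}\int_{\R^n\setminus\{0\}} \psi(-\Delta)^s\psi\, dx = |\Sph^{n-1}|\int_\R \phi(t)\bigl(P\phi(t) + \phi(t)\bigr)\, dt.$$
The $1/c_{n,s}$ factor in the definition of $\mathscr{Q}_u$ is exactly the normalization that reconciles the constant in $(-\Delta)^s$ with the constant $\gamma_{n,s}$ baked into the kernel $K$, so that the conformal formula holds without extra prefactors.

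Finally, by Fubini and the evenness $K(t-\tau) = K(\tau-t)$, I would symmetrize
$$\int_\R \phi(t)\, P\phi(t)\, dt = \frac{1}{2}\int_\R\int_\R \bigl(\phi(t)-\phi(\tau)\bigr)^2 K(t-\tau)\, d\tau\, dt,$$
and add the three displays to arrive at $\mathscr{Q}_u[\psi] = |\Sph^{n-1}| Q_v[\phi]$. The only mildly delicate points are handling the principal value in the symmetrization, which is routine given the kernel asymptotics \eqref{beh_kernel} and the fact that $\phi \in C_0^\infty(\R)$, and tracking the constants $c_{n,s}$ and $\gamma_{n,s}$ coherently. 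Neither is a real obstacle: the exponent $(n-2s)/2$ in the definition of $v$ and $\phi$, together with the normalization of $K$, are chosen precisely so this identity closes up cleanly. The whole proof is a direct computation that exploits the conformal scaling, with no qualitative input beyond what has already been set up in the introduction.
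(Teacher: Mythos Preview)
Your proposal is correct and follows essentially the same route as the paper: write $\mathscr{Q}_u[\psi]$ in polar coordinates, apply the conformal identity $(-\Delta)^s\psi = r^{-(n+2s)/2}(P\phi+\phi)$ together with the Emden--Fowler change $r=e^{-t}$, and then symmetrize $\int_\R \phi\, P\phi$ into $\tfrac12\int\!\!\int(\phi(t)-\phi(\tau))^2K(t-\tau)\,d\tau\,dt$. The paper spends more space on the symmetrization step---it passes the limit through the principal value via a Taylor expansion and dominated convergence, then uses Fubini on the truncated domain---but you correctly flag this as the only delicate point and as routine for $\phi\in C_0^\infty(\R)$ with the kernel asymptotics \eqref{beh_kernel}.
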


\begin{proof}
	
	We rewrite \eqref{quadrn} using polar coordinates and the change of variable $t=-\log r$ 
	\begin{equation}\label{qf-1}
		\begin{split}
		\mathscr{Q}_u[\psi]&= \frac{\abs{\Sph^{n-1}}}{c_{n,s}}\int_0^{\infty}\psi(r)(-\Delta)^s\psi(r)r^{n-1} \,dr- \frac{n+2s}{n-2s}\abs{\Sph^{n-1}}\int_{0}^{\infty}u(r)^\frac{4s}{n-2s}\psi^2(r)r^{n-1}\, dr\\&=\abs{\Sph^{n-1}}\int_\R e^{\frac{n-2s}{2}t}\phi(t)e^{\frac{n+2s}{2}t} \left(P\phi(t)+\phi(t)\right)e^{-(n-1)t}e^tdt\\&-\frac{n+2s}{n-2s} \abs{\Sph^{n-1}}\int_\R e^{2st}v(t)^\frac{4s}{n-2s}e^{(n-2s)t}\phi(r)^2e^{-(n-1)t}e^tdt \\[0.15cm]&= \abs{\Sph^{n-1}}\left(\int_\R\phi(t){P\phi(t)dt}+\int_\R \phi(t)^2dt-\frac{n+2s}{n-2s}\int_\R v(t)^\frac{4s}{n-2s}\phi(t)^2dt\right).
		\end{split}
	\end{equation}
We claim that:
\begin{equation}\label{qf-2}
\int_\R \phi(t) P\phi(t)dt = \frac{1}{2}\int_\R\int_\R  K(t-\tau)\left(\phi(t)-\phi(\tau)\right)^2d\tau dt.
\end{equation}
By definition,
\begin{align*}
\int_\R\phi(t)P\phi(t)dt &= \int_\R\phi(t)\,\left(\text{P.V.}\,\int_\R(\phi(t)-\phi(\tau))K(t-\tau)d\tau\right)dt \\ &=\int_\R\phi(t)\left(\lim_{\e\to0}\int_{\R\setminus(t-\e,t+\e)}(\phi(t)-\phi(\tau))K(t-\tau)d\tau\right)dt.
\end{align*}
Call \[F_\e(t):=\phi(t)\int_{\R\setminus(t-\e,t+\e)}(\phi(t)-\phi(\tau))K(t-\tau)d\tau.\]
Also, for $R>0$ large enough, define $A(t,\e,R)=[t-R,t+R]\setminus(t-\e,t+\e)$. Then, by the properties of the kernel $K$ and Taylor expansion,
\begin{align*}
\abs{F_\e(t)}&=\abs{\phi(t)}\abs{\phi'(t)\int_{A(t,\e,R)}K(t-\tau)(t-\tau)d\tau+\frac{1}2\int_{A(t,\e,R)}\phi''(\xi)K(t-\tau)(t-\tau)^2+C\,} \\ &\leq \abs{\phi(t)}\left(\norm{\phi''}_{L^\infty(\R)}\int_{t+\e}^{t+\e+R}\frac{\bar C}{\abs{t-\tau}^{2s-1}}d\tau+C\right) \leq C \abs{\phi(t)},
\end{align*}
for some $\xi$ depending on $t,\tau,\e$ and $R$. Since $\phi(t)$ is compactly supported, by the Dominated Convergence Theorem we can write
\[
\int_\R\phi(t)P\phi(t)dt = \lim_{\e\to0}\int_\R\int_{\R\setminus(t-\e,t+\e)}\phi(t)(\phi(t)-\phi(\tau))K(t-\tau)d\tau dt.
\]
Observe that the function $\phi(t)(\phi(t)-\phi(\tau))K(t-\tau)$ is integrable in $\{(t,\tau)\in\R^2:\:\abs{t-\tau}>\e\}$. Therefore, relabeling the integration variables and using Fubini's Theorem, one can see that
\begin{eqnarray*}
\lim_{\e\to0}\int_\R\int_{\R\setminus(t-\e,t+\e)}\hspace{-0.7cm}\phi(t)(\phi(t)-\phi(\tau))K(t-\tau)d\tau dt = -\lim_{\e\to0}\int_\R\int_{\R\setminus(t-\e,t+\e)}\hspace{-0.7cm}\phi(\tau)(\phi(t)-\phi(\tau))K(t-\tau)d\tau dt.
\end{eqnarray*}
Consequently, 
\begin{align*}
\int_\R\phi(t)P\phi(t)dt &= \lim_{\e\to0}\int_\R\int_{\R\setminus(t-\e,t+\e)}\hspace{-0.7cm}\phi(t)(\phi(t)-\phi(\tau))K(t-\tau)d\tau dt = \\&=  \lim_{\e\to0} \frac 1 2 \int_\R\int_{\R\setminus(t-\e,t+\e)}\hspace{-0.7cm}(\phi(t)-\phi(\tau))^2K(t-\tau)d\tau dt, 
\end{align*}	
and the claim follows because of integrability of the last term. 
\medskip 

Finally, we can combine \eqref{qf-1} and \eqref{qf-2} to conclude the proof of the Lemma.
\end{proof}
The above lemma motivates the following definition:

\begin{definition} Let us define:
	$$ind(v) = \sup \{ dim F: \ F \subset C_0^{\infty}(\r) \mbox{a vector space such that } Q_v|_F \mbox{ is negative definite} \}.$$
\end{definition}
Clearly, since we are restricting ourselves to radial functions, $Ind(u) \geq ind(v)$ where $v(t)= r^{\frac{n-2s}{2} } u(r)$, $r=e^{-t}$.

\begin{remark} \label{remark} A first remark is that the Morse index of the solutions $v$ as defined above is translation invariant. In other words, if $v$ is a solution of \eqref{eq:cylinder} and we define $v_a(t)= v(t - a)$, $a \in \R$, then $v_a$ is also a solution of \eqref{eq:cylinder} and $ind(v)=ind(v_a)$.
\end{remark}
The following result shows that for every convergent sequence of solutions of \eqref{eq:cylinder}, one can take a subsequence whose Morse Index stays above the Morse Index of its limit. There exist analogue results for the local case, but their proofs cannot be translated to the nonlocal framework immediately.

\begin{lemma}\label{lem:reg}
	Let $v_k$ be a sequence of solutions of \eqref{eq:cylinder} satisfying \eqref{boundedness} uniformly in $k$. Then, up to taking a subsequence, $v_k\to v_\infty$ in $C^h_{loc}(\R)$ for any $h \in \mathbb{N}$ and $v_\infty$ also solves \eqref{eq:cylinder}. Moreover,
	
	\begin{equation} \label{order} \liminf_{k \to + \infty} ind(v_k) \geq ind(v_{\infty}). \end{equation}
\end{lemma}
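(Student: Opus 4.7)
My plan is to break the statement into two independent pieces: a compactness argument yielding a subsequence that converges in $C^h_{loc}$ to a limit solving \eqref{eq:cylinder}, followed by a finite-dimensional perturbation argument for the Morse index. For the first piece, I start from the uniform bound $0<c_1\leq v_k\leq c_2$ supplied by \eqref{boundedness}, which makes the right-hand side of \eqref{eq:cylinder} bounded in $L^\infty(\R)$, so $Pv_k = v_k^{(n+2s)/(n-2s)} - v_k$ is also uniformly bounded. By \eqref{beh_kernel}, the kernel $K$ is singular like $|\xi|^{-1-2s}$ at the origin and decays exponentially at infinity, so the principal part of $P$ matches that of the one-dimensional fractional Laplacian. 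I would invoke the interior Schauder-type estimates available for integro-differential operators of this kind (as in the references collected in \cite{survey, surveyRO}) to obtain uniform $C^{2s+\alpha}$ bounds on any compact interval $J\subset\R$. Because $t\mapsto t^{(n+2s)/(n-2s)}$ is smooth on $[c_1,c_2]$, a standard bootstrap then yields uniform $\norm{v_k}_{C^h(J)}$ bounds for every $h\in\N$, and Arzel\`a--Ascoli together with a diagonal argument extracts a subsequence converging in $C^h_{loc}(\R)$ to some $v_\infty$ still satisfying \eqref{boundedness}. Passing to the limit in $Pv_k(t) = v_k^{(n+2s)/(n-2s)}(t)-v_k(t)$ is then done pointwise by splitting the defining integral into a neighborhood of $t$ (handled by the local $C^2$ convergence via the same principal-value manipulation used in \eqref{qf-2}) and its complement (handled by dominated convergence, using the $L^\infty$ bound and the exponential decay of $K$).

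For the Morse-index inequality, I assume $ind(v_\infty)\geq m$ and pick an $m$-dimensional subspace $F\subset C_0^\infty(\R)$ on which $Q_{v_\infty}$ is negative definite. Fixing a basis $\phi_1,\dots,\phi_m$ of $F$ with supports contained in a common compact interval $J$, I look at the symmetric bilinear form $B_v(\phi,\psi)=\tfrac12\bigl(Q_v[\phi+\psi]-Q_v[\phi]-Q_v[\psi]\bigr)$. By \eqref{radialquad}, $B_v$ splits into a $v$-independent piece plus $-\tfrac{n+2s}{n-2s}\int_J v^{4s/(n-2s)}\phi\psi\,dt$. Since $v_k\to v_\infty$ uniformly on $J$ with values in the compact range $[c_1,c_2]$ where $t\mapsto t^{4s/(n-2s)}$ is smooth, the Gram matrix $[B_{v_k}(\phi_i,\phi_j)]_{i,j}$ converges entrywise to $[B_{v_\infty}(\phi_i,\phi_j)]_{i,j}$, which is negative definite by hypothesis. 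Openness of the cone of negative definite symmetric matrices then forces $Q_{v_k}|_F$ to be negative definite for all large $k$, so $ind(v_k)\geq m$ eventually. Letting $m\to ind(v_\infty)$ gives \eqref{order}.

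The step I expect to be most delicate is the regularity bootstrap in the first part: although $P$ has the same singular behavior as $(-\Delta)^s$ on $\R$, it is a nonstandard integro-differential operator with a custom kernel, and the Schauder estimates must be quoted in a formulation that covers kernels of the type \eqref{eq:kernel} (not only the pure power kernel of $(-\Delta)^s$). A secondary subtlety is justifying the pointwise passage to the limit in the principal-value integral $Pv_k(t)$, but the argument is essentially the same as the one already carried out in the proof of Lemma \ref{lemmaquad} for \eqref{qf-2}. Once both are granted, the rest reduces to Arzel\`a--Ascoli, dominated convergence, and the openness of negative-definiteness in finite dimension---robust ingredients that do not require new ideas beyond the standard toolbox.
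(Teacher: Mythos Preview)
Your proposal is correct and follows essentially the same route as the paper: uniform derivative bounds plus Arzel\`a--Ascoli and a diagonal argument for the compactness, a near/far splitting of the principal-value integral (Taylor expansion near $t$, exponential decay of $K$ away from $t$) to pass to the limit in $Pv_k$, and convergence of $Q_{v_k}$ to $Q_{v_\infty}$ on a fixed finite-dimensional $F\subset C_0^\infty(\R)$ for \eqref{order}. Regarding your stated concern about the bootstrap for the nonstandard kernel, the paper sidesteps it by citing \cite[\S 3, Remark~3.3]{DDGW} directly for the uniform $C^h$ estimates of solutions to \eqref{eq:cylinder}, rather than re-deriving Schauder theory for $K$.
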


\begin{proof} 
	The $C^\infty-$regularity of the solutions of \eqref{eq:cylinder} is well-known, see \cite[\textsection 3]{DDGW} and \cite{hitchhiker}. In any given compact set $\Omega\subset \subset \R$, all the derivatives of $v_k$ are uniformly bounded and equicontinuous (see \cite[Remark 3.3]{DDGW}) and thus admit a uniformly convergent subsequence $v_k^{(l)}\to v_\infty^{(l)}$ in $\Omega$ by Ascoli-Arzel\`a Theorem (see also \cite[Th. 7.17]{rudinbook}). By a standard Cantor diagonalization argument, we have $v_k\to v_\infty$ in $C^\infty_{loc}(\R).$
	
	\medskip 
	
	Now, let us show that $v_\infty$ solves \eqref{eq:cylinder}. We will use the local $C^\infty$ convergence and the decay of the kernel given by \eqref{beh_kernel} to see that $ P v_k \to  P v_\infty$ a.e. in $\R$. Let us call $w_k=v_k-v_\infty$. Then, for every {fixed} $t\in \R:$
	
	\begin{equation} \label{regularity01}
	\begin{split}	
		{P} w_k(t) &= \text{P.V.}\int_\R K(t-\tau)((w_k(t)-w_k(\tau))d\tau \\ & \leq \text{P.V.}\int_I K(t-\tau)((w_k(t)-w_k(\tau))d\tau + C\int_{\R\backslash I} \left\vert w_k(t)-w_k(\tau)\right\vert e^{-\frac{n+2}{2}(\tau-t)}d\tau
	\end{split}
	\end{equation}
	where $I=[t-M,t+M]$ and $M>t+1$. By Taylor expansion, using the symmetry of the kernel, 
	\begin{equation}\label{regularity02}
	 \text{P.V.} \, 	\int_{I} (w_k(t)-w_k(\tau))K(t-\tau)d\tau \leq w_k'(t)\text{P.V.} \, \int_I (\tau-t)K(t-\tau)d\tau+C\norm{w_k''}_{L^\infty(I)}\int_I \frac{(\tau-t)^2}{\abs{t-\tau}^{1+2s}}.
	\end{equation}
	Notice that the first term in the right-hand side of \eqref{regularity02} is zero because of symmetry, and the second one is finite since $s< 1$. On the other hand,
	\begin{equation}\label{regularity03}
		\int_{\R\backslash I} \left\vert w_k(t)-w_k(\tau)\right\vert e^{-\frac{n+2}{2}(\tau-t)}d\tau \leq \abs{w_k(t)}\frac{4e^{\frac{-M(n+2)}{2}}}{n+2}+\int_{\R\backslash I}\abs{w_k(\tau)}e^{-\frac{n+2}{2}(\tau-t)}d\tau\to 0,
	\end{equation}
	by the Dominated Convergence Theorem, since $w_k\to0$ a.e. in $\R.$ Inserting \eqref{regularity02} and \eqref{regularity03} in \eqref{regularity01}, we conclude
	\[
		\abs{{P} w_k(t)} \leq C \left(\norm{w''_k}_{L^\infty(I)} +\abs{w_k(t)}\right) \to 0.
	\]
Let us now show \eqref{order}. Take $F \subset C_0^{\infty}(\r)$ a finite dimensional vector space such that $Q_{v_{\infty}}[\psi]<0$ for all $\psi \in F$. By compactness, we can show that $Q_{v_{\infty}}[\psi]<-\e<0$ for all $\psi \in F$, $\| \psi \|=1$, where $\| \cdot \|$ represents any norm in $F$.

Given $\psi \in F$, $\| \psi\|=1$, we have that from the definition of $Q_v$ given in \eqref{radialquad},
\[ Q_{v_k}[\psi]  \to Q_{v_{\infty}}[\psi].\]
Then, there exists $k_0 \in \N$ such that if $k \geq k_0$, $Q_{v_k}|_F$ is negative definite. As a consequence, $ind(v_k) \geq dim \ F$. This concludes the proof.
\end{proof}

The proof of Theorem \ref{th_morse_index} is divided into two cases, depending on the behavior of the solution $v$. This is determined by the following definition.
\begin{definition}\label{def:osc}
		Let $v$ be a solution for \eqref{eq:cylinder}. We say that $v$ satisfies the Oscillation Condition (OC) if

	\begin{equation}\tag{OC}\begin{split}
		\exists\ M>0, \ \epsilon>0: \ \forall a,b\in\R, \ b-a=M,\\ \quad \max \{ v(t):\ t \in [a,b]\} > 1+\e \ \text{ and } \ \min\{ v(t): \ t \in [a,b]\} < 1- \e.
		\end{split}\end{equation}

\end{definition}

In the next proposition we prove that if $v$ does not satisfy the above Oscillation Condition, its Morse index is infinity. 

\begin{proposition} \label{prop2} Let $v$ be a solution of \eqref{eq:cylinder} satisfying \eqref{boundedness}. If $v$ does not satisfy (OC), then $ind(v)=+\infty$.
\end{proposition}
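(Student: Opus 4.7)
The plan is to exploit the failure of (OC) to produce a sequence of translates of $v$ that converge to the constant solution $1$, and then to show that the quadratic form $Q_1$ admits arbitrarily large negative-definite subspaces. Combined with Remark \ref{remark} (translation invariance of the Morse index) and the semicontinuity in Lemma \ref{lem:reg}, this will give $\text{ind}(v)=+\infty$.

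\textbf{Step 1: A translated subsequence converging to $1$.} Negating (OC), for every $k\in\mathbb{N}$ there exists an interval $[a_k,b_k]$ with $b_k-a_k=k$ on which either $v\le 1+1/k$ throughout, or $v\ge 1-1/k$ throughout. Passing to a subsequence I may assume only one of the two alternatives occurs for all $k$; say the first (the other case is symmetric). Set $c_k=(a_k+b_k)/2$ and $v_k(t)=v(t+c_k)$. Each $v_k$ is a solution of \eqref{eq:cylinder} satisfying \eqref{boundedness} with the same constants. Applying Lemma \ref{lem:reg}, after a further subsequence $v_k\to v_\infty$ in $C^h_{\mathrm{loc}}(\R)$, with $v_\infty$ also solving \eqref{eq:cylinder} and \eqref{boundedness}. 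Since on $[-k/2,k/2]\subset I_k-c_k$ we have $v_k(t)\le 1+1/k$, locally uniform convergence yields $v_\infty(t)\le 1$ for every $t\in\R$. Proposition \ref{prop1} then forces $v_\infty\equiv 1$. By Remark \ref{remark}, $\text{ind}(v_k)=\text{ind}(v)$ for every $k$, so \eqref{order} gives $\text{ind}(v)\ge \text{ind}(1)$, and it only remains to prove $\text{ind}(1)=+\infty$.

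\textbf{Step 2: Infinite Morse index of the constant solution.} For $v\equiv 1$, the form \eqref{radialquad} reads
\[
Q_1[\phi]=\mathcal{T}(\phi)-\frac{4s}{n-2s}\int_{\R}\phi(t)^2\,dt.
\]
By Lemma \ref{eigenvalues}(iii), choose $M$ so large that $\delta:=\frac{4s}{n-2s}-\lambda_1(M)>0$, and let $\phi_1\in H^s(\R)\cap C^0(\R)$ be the corresponding first eigenfunction with $\supp\phi_1=[-M,M]$ and $\int\phi_1^2=1$. Given any $N\in\mathbb{N}$, pick centers $a_1<\cdots<a_N$ with consecutive gaps $\gg M$ and set $\phi_1^{(j)}(t)=\phi_1(t-a_j)$; these have pairwise disjoint supports. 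Since with disjoint supports one has $(\phi_1^{(j)}(t)-\phi_1^{(j)}(\tau))(\phi_1^{(k)}(t)-\phi_1^{(k)}(\tau))=-\phi_1^{(j)}(t)\phi_1^{(k)}(\tau)-\phi_1^{(j)}(\tau)\phi_1^{(k)}(t)$, expansion of $\mathcal{T}$ yields
\[
Q_1\!\left[\sum_{j=1}^N c_j\phi_1^{(j)}\right]= -\delta\sum_{j}c_j^2-\sum_{j\neq k}c_jc_k A_{j,k},\qquad A_{j,k}:=\int_{\R}\!\!\int_{\R}K(t-\tau)\phi_1^{(j)}(t)\phi_1^{(k)}(\tau)\,dt\,d\tau\ge 0.
\]
The exponential tail of $K$ in \eqref{beh_kernel} gives $A_{j,k}\le C\,e^{-\frac{n+2s}{2}\,\distance(\supp\phi_1^{(j)},\supp\phi_1^{(k)})}$, so spreading the $a_j$ sufficiently far apart arranges $\sum_{k\neq j}A_{j,k}<\delta/2$ for every $j$. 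Using $2|c_jc_k|\le c_j^2+c_k^2$ and symmetry of $A_{j,k}$,
\[
\Bigl|\sum_{j\neq k}c_jc_k A_{j,k}\Bigr|\le \sum_{j}c_j^2\sum_{k\neq j}A_{j,k}<\frac{\delta}{2}\sum_j c_j^2,
\]
so $Q_1[\sum c_j\phi_1^{(j)}]\le -\tfrac{\delta}{2}\sum c_j^2<0$ unless all $c_j=0$. A standard $C_0^\infty$ mollification of each $\phi_1^{(j)}$ perturbs the form arbitrarily little in $H^s$ and preserves strict negativity, hence $Q_1$ is negative definite on an $N$-dimensional subspace of $C_0^\infty(\R)$. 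Since $N$ is arbitrary, $\text{ind}(1)=+\infty$.

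\textbf{Main obstacle.} The delicate point is Step 2: in the nonlocal setting disjoint supports do \emph{not} annihilate the interactions inside $\mathcal{T}$, and the apparently innocuous cross-terms $-\sum_{j\ne k}c_jc_k A_{j,k}$ have indefinite sign. Killing them requires precisely the exponential decay of $K$ at infinity from \eqref{beh_kernel}, together with the translation invariance that lets us move the building-block eigenfunctions arbitrarily far apart. A secondary technicality is that Lemma \ref{eigenvalues} only produces $\phi_1\in H^s\cap C^0$; density of $C_0^\infty$ in $H^s$ (and continuity of $Q_1$ on uniformly compactly supported functions) handles this without difficulty.
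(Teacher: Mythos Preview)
Your proof is correct and follows essentially the same approach as the paper: translate $v$ along the ``flat'' intervals furnished by the failure of (OC), pass to the limit to obtain the constant solution $1$ via Proposition~\ref{prop1} and Lemma~\ref{lem:reg}, then build an $N$-dimensional negative subspace for $Q_1$ from far-separated translates of the first eigenfunction $\phi_1$ of Lemma~\ref{eigenvalues}. The only cosmetic differences are that the paper first approximates $\phi_1$ by a $C_0^\infty$ function and then translates (while you translate first and mollify at the end), and that the paper bounds the cross terms $A_1[\psi_i,\psi_j]$ via the monotonicity of $K$ whereas you invoke the exponential tail in \eqref{beh_kernel} directly and control $\sum_{k\ne j}A_{j,k}$ by $\delta/2$; both arguments are equivalent.
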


\begin{proof}

	Take $M_k:=k$ and $\epsilon_k:=\frac{1}{k}$ for any $k \in \N$. Then there exists a sequence of points $t_k\in\r$ and intervals $I_k=[t_k-k/2,t_k+k/2]$ such that either
	\begin{equation} \label{maxomin}
		\max \{ v(t):\ t \in I_k\} \leq 1+\e_k, \quad  \text{or} \quad \min \{ v(t):\ t \in I_n\} \geq 1-\e_k.
	\end{equation}
	We suppose that the first possibility holds, the other case being analogous. Define a sequence of bounded functions by translation as  $v_k(t)=v(t-t_k)$; we can apply Lemma \ref{lem:reg} 
	to assert that there exists $v_0$ solution to \eqref{eq:cylinder} such that $v_k\to v_0$ in $C^h_{\text{loc}}(\r)$. Since $\max \{ v(t):\ t \in I_k\} \leq 1+\frac{1}{k}$  we conclude that $v_0(t) \leq 1$ for all $t \in \R$. By Proposition \ref{prop1}, we conclude that $v_0(t)=1$ for all $t \in \R$. We now use Lemma \ref{lem:reg} to conclude that 
	
	$$ \liminf_{k \to +\infty} ind(v_k) \geq  ind(1).$$
	By the invariance of the Morse index via translation (see Remark \ref{remark}), we have that actually $ind(v_k)=ind(v)$. 

\medskip 
	
The proof concludes by showing that $ind(1)=+\infty$. In this case the quadratic form reads:

\[Q_1[\phi]= \frac 1 2 \int_\R \int_\R \left(\phi(t)-\phi(\tau)\right)^2K(t-\tau)\, d\tau dt - \frac{4 \, s }{n-2s}\int_\R \phi(t)^2dt.\]
	
By Lemma \ref{eigenvalues}, we can take $M>0$ so that $\lambda_1(M) < \frac{4 \, s}{n-2s}$. If $\phi_1$ is the corresponding eigenfunction, we have that $ Q_1[\phi_1] <0.$ Using the density of $C^\infty_0(-M,M)$ in \newline $\left\lbrace u\in H^{s}(\R): \supp u\subset [-M,M]\right\rbrace$, we find $\phi \in C_0^{\infty}(-M,M)$ with 
\[Q_1[\phi]<0.\]

Take now $m \in \N$, and define $\psi_j(x)= \phi(x- j (d_m+2M))$, $j=1 \dots m$, where $d_m$ is a large positive constant that will be chosen later, and provides a lower bound for the minimal distance between the supports of $\psi_j$. Clearly, the functions $\psi_j$ form a linearly independent set in $C_0^{\infty}(\R)$. Given $0 \neq \psi = \sum_{j=1}^m \alpha_j \psi_j$, we have that:

$$ Q_1[\psi] = \sum_{j=1}^m \alpha_j^2 Q_1[\psi_j] + \sum_{i\neq j}^m \alpha_i \alpha_j A_1[\psi_i, \psi_j].$$
Here $A_1$ denotes the bilinear form:

\[
	A_1[\phi, \varphi]=	\frac 1 2 \int_\R \int_\R \left(\phi(t)-\phi(\tau) \right) \, \left(\varphi(t)-\varphi(\tau) \right)K(t-\tau)\, d\tau dt - \frac{4 \, s }{n-2s}\int_\R \phi(t) \varphi(t)dt.
\] 
Observe that for $i\neq j$, since $\psi_i$ and $\psi_j$ have disjoint supports:
\begin{align*}
 |A_1[\psi_i, \psi_j]| &\leq \int_{\text{supp}(\psi_i)}\abs{\psi_i(t)}\int_{\text{supp}(\psi_j)}\abs{\psi_j(\tau)}K(t-\tau)\,d\tau dt \\ &\leq K(\abs{i-j}(d_m+2M)) \left( \int_\R |\psi_i(t)| \, dt  \right) \left( \int_\R |\psi_j(\tau)| \, d \tau \right)  \to 0 \mbox{ if } d_m \to +\infty.
\end{align*}
Then, for sufficiently large $d_m$, we conclude that $Q_1[\psi]<0$. By definition, $ind(1) \geq m$. Since $m$ is arbitrary, we conclude.

\end{proof}
We finish the proof of Theorem \ref{th_morse_index} by showing that the Morse index of the solutions of \eqref{eq:cylinder} that satisfy (OC) is also infinite. This computation is based on the following proposition:

\begin{proposition} \label{negative} Let $v$ be a solution to \eqref{eq:cylinder} which satisfies the Oscillation Condition {(OC)} given in Definition \ref{def:osc} for certain $M>0$, $\e>0$. Then for any interval $I=[a,b]$ of length $|b-a| \geq 5M$, there exists a function $\phi \in C_0^{\infty}(\R)$ with $supp \ \phi \subset I$ such that
	\[%\label{eq:bound_Q}
	\| \phi \|_{L^{\infty}} \leq \frac{1}{\delta} \mbox{ and }	Q_v[\phi]  \leq - \delta
	\]
	where $\delta>0$ does not depend on the choice of the interval $I$.
\end{proposition}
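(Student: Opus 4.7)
Following the hint in the introduction, the plan is to test with $\phi=\eta|v'|$ for a smooth cut-off $\eta\in C_0^\infty(I)$, $0\le\eta\le 1$, equal to $1$ on an inner subinterval of length at least $3M$ (room available since $|b-a|\ge 5M$). Since $v$ is uniformly Lipschitz (by \eqref{boundedness} and Lemma \ref{lem:reg}), $|v'|\in H^s_0(I)$ and $\eta|v'|$ can be approximated in $H^s$ by functions in $C_0^\infty(I)$, so in the end a genuine $C_0^\infty$ test function will do. The first algebraic input is the pointwise identity $(ab-cd)^2-(a|b|-c|d|)^2=4ac|bd|\,\mathbf{1}_{\{bd<0\}}$, valid for $a,c\ge 0$. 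Combined with $|v'|^2=(v')^2$ (so the potential parts of $Q_v$ for $\eta v'$ and $\eta|v'|$ coincide), it yields
\[
Q_v[\eta|v'|] = Q_v[\eta v'] - 2J,\qquad J:=\iint_{\R\times\R} K(t-\tau)\,\eta(t)\eta(\tau)\,|v'(t)v'(\tau)|\,\mathbf{1}_{\{v'(t)v'(\tau)<0\}}\,dt\,d\tau\;\ge\;0.
\]

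The second step handles the ``error'' $Q_v[\eta v']$ by exploiting that $v'$ solves the linearized equation. Differentiating \eqref{eq:cylinder} gives $Lv'=0$ for $L:=P+1-\frac{n+2s}{n-2s}v^{4s/(n-2s)}$; since the zero-order part of $L$ commutes with $\eta$, one has $L(\eta v')=[P,\eta]v'$, and a symmetrisation analogous to the one performed for \eqref{qf-2} produces the clean formula
\[
Q_v[\eta v']=\int \eta v'\,[P,\eta]v'\,dt=\frac{1}{2}\iint K(t-\tau)\,v'(t)v'(\tau)\,\bigl(\eta(t)-\eta(\tau)\bigr)^2\,dt\,d\tau.
\]
Using the uniform bound $|v'|\le \|v'\|_\infty$ (from standard regularity for \eqref{eq:cylinder} and \eqref{boundedness}) one obtains $|Q_v[\eta v']|\le C$, where $C$ depends only on $M,s,n,\|v'\|_\infty$ and the profile of $\eta$; crucially, $C$ does not grow with the length of the plateau of $\eta$, as it is controlled by the Gagliardo-type seminorm of $\eta$, which is concentrated in the two boundary transitions.

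For the lower bound on $J$ I would invoke (OC). On every $M$-subinterval inside the plateau $\{\eta=1\}$ the function $v$ attains values both above $1+\e$ and below $1-\e$; combined with the uniform $C^2$ bound on $v$, a mean-value / integrated-derivative argument yields disjoint sets $A_\pm\subset\{\eta=1\}$ on which $\pm v'\ge c_0>0$, each of measure $\ge c_1>0$, with $c_0,c_1$ depending only on $\e,M,\|v''\|_\infty$. Since the points of $A_\pm$ are at mutual distance at most $|I|$ from one another within a single $M$-subinterval, $K$ is bounded below by some $K_{\min}>0$ on the relevant set. This produces one contribution $c_0^2 c_1^2 K_{\min}$ per $M$-subinterval in the plateau, and hence a total lower bound $J\ge \delta_0>0$ with $\delta_0$ depending only on $\e,M,n,s$ and the uniform $C^2$ bound on $v$.

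The main obstacle — and what dictates the threshold $|I|\ge 5M$ — is to combine the two estimates into a uniform inequality $Q_v[\phi]=Q_v[\eta v']-2J\le -\delta$. The point is that $|Q_v[\eta v']|$ is controlled by the boundary transitions of $\eta$ alone and does not grow with the plateau, whereas $J$ accumulates additively across the sign changes of $v'$ produced in the plateau by (OC); choosing $\eta$ with plateau of length $\ge 3M$ (hence at least three sign changes) and transitions of length $\ge M$ each, one gets $2J\ge 2m\delta_0$ with $m\ge 3$, while the error term stays below a fixed constant, so that for the right profile of $\eta$ the inequality $Q_v[\phi]\le -\delta$ holds with $\delta>0$ depending only on the data. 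Uniformity in the position $a$ of $I$ follows from a compactness argument via Lemma \ref{lem:reg}: any failure along translates $v(\cdot+a_k)$ would, on a subsequence, pass to a limit solution $v_\infty$ still satisfying (OC) with the same parameters, contradicting the construction. Finally, since $\|\phi\|_\infty\le\|v'\|_\infty$ is uniformly bounded, a scalar rescaling $\phi\mapsto\lambda\phi$ adjusts the two inequalities so that $\|\phi\|_\infty\le 1/\delta$ and $Q_v[\phi]\le -\delta$ hold with the same $\delta>0$.
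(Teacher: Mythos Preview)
Your overall strategy---test with (a cutoff times) $|v'|$, exploit the identity relating $Q_v[\eta|v'|]$ to $Q_v[\eta v']$, and use that $v'$ lies in the kernel of the linearized operator---is exactly the idea behind the paper's proof. The commutator formula $Q_v[\eta v']=\tfrac12\iint K\,v'(t)v'(\tau)(\eta(t)-\eta(\tau))^2$ and the observation that $\mathcal T(\eta)$ is controlled by the two transition zones (hence bounded independently of the plateau length) are both correct.

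The gap is quantitative: with a smooth cutoff you are left with the competition $Q_v[\eta|v'|]=Q_v[\eta v']-2J$, and you never show that $2J$ dominates the error at the stated threshold $|I|\ge 5M$. Your accumulation argument gives $2J\ge 2m\delta_0$ with $m\approx 3$, while $|Q_v[\eta v']|$ is bounded by some fixed constant $C_0$ depending on $\|v'\|_\infty$ and the transition profile; nothing in your write-up forces $6\delta_0>C_0$. (``For the right profile of $\eta$'' is not an argument: making the transitions steeper only increases $C_0$, and the transition length is pinned at $M$.) Your compactness step then inherits the same gap, since it presupposes the construction succeeds for the limit $v_\infty$. Your scheme would work if you allowed $|I|\ge N_0 M$ for some large $N_0$ depending on $\|v\|_{C^2}$, $\e$, $M$---enough for the application to infinite Morse index, but not the proposition as stated.

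The paper sidesteps this competition entirely by choosing a \emph{hard} cutoff at two zeros of $v'$: using (OC) it locates $x_0<x_1$ inside $I$ with $v'(x_0)=v'(x_1)=0$ and with $\int_{x_0}^{x_1}(v')^\pm>2\e$. Setting $\eta=|v'|\chi_{[x_0,x_1]}$ (which is continuous, hence in $H^s$, because $v'$ vanishes at the endpoints) and testing the linearized equation against $v'\chi_{[x_0,x_1]}$, the $(\eta(t)-\eta(\tau))^2$-type error disappears on $J\times J$ and one obtains directly
\[
Q_v[\eta]\;\le\;4\int_{J^+}\!\int_{J^-}K(t-\tau)\,v'(t)v'(\tau)\,dt\,d\tau\;<\;-\,C\,K(10M)\,\e^2,
\]
an explicit uniform bound with no balancing of terms and no compactness needed. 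The role of the hypothesis $|b-a|\ge 5M$ is precisely to guarantee, via five consecutive $M$-windows, the existence of such $x_0,x_1$ with the integral lower bounds on $(v')^\pm$. That is the idea you are missing: truncate at critical points of $v$, not with an extrinsic smooth cutoff.
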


\begin{proof}

Take $I=[a,b]$ an interval as in the statement of the proposition. The proof is developed in several steps.

\medskip {\bf Step 1:} There exists $a< x_0 < x_1 < b$ such that $v'(x_j)=0$, $j=0,1$, and  
\begin{equation}\label{eq:integral_bounds}
\int_{x_0}^{x_1} (v'(t))^+ \, dt > 2 \e, \ \int_{x_0}^{x_1} (v'(t))^- \, dt > 2 \e.
\end{equation}
Here $f^+(x):=max\{f(x),0\}$ and $f^-(x):=max\{-f(x),0\}$ denote the functions positive and negative part of $f$.
\medskip

Take $t_i$, $i=1 \dots 6$, $t_1=a$, $t_6=b$, $t_{i+1} - t_i = \frac{b-a}{5}$, and define $I_i=[t_i, t_{i+1}]$, $i = 1 \dots 5$. By the Oscillation Condition (OC), there exists $y_i \in (t_i, t_{i+1})$ with $v(y_i)=1$, $i = 1 \dots 5$.

Observe now that the interval $[y_1, y_3]$ contains $I_2$. Again by (OC), there exists an interior global maximum and minimum of $v$ in $(y_1, y_3)$ which are greater than $1+\e$ and smaller than $1-\e$, respectively. Take $x_0 = \min \{t >y_1: \ v'(t)=0\}$.

We now reason analogously in the interval $[y_3, y_5]$, which contains $I_4$, and define $x_1= \max \{t <y_5: \ v'(t)=0\}$.

Let us take the minimum value of $v$ in $(y_1, y_3)$, achieved at $y$, and the maximum of $v$ in $(y_3, y_5)$, attained at $\bar{y}$. Then,

$$ 2 \e \leq v(\bar{y}) - v(y) = \int_{y}^{\bar{y}} v'(s) \, ds \leq \int_{y}^{\bar{y}} (v'(s))^+ \, ds \leq \int_{x_0}^{x_1} (v'(s))^+ \, ds.$$
By taking a maximum in $(y_1, y_3)$ and a minimum in $(y_3, y_5)$, we can reason as previously to obtain 

$$ 2 \e \leq \int_{x_0}^{x_1} (v'(s))^- \, ds,$$
as claimed.

\medskip {\bf Step 2:} Define $\eta: \R \to \R$ as:
$$ \eta(t)= \left \{ \begin{array}{ll} |v'(t)| & t \in [x_0, x_1], \\ 0 & \mbox{ otherwise. }  \end{array} \right. $$
Then $\eta \in H^s(\R)$ and $Q_v[\eta]< - \delta$, where $\delta >0$ depends only on $\e $ and $M$.

The assertion  $\eta \in H^s(R)$ is inmediate from its definition. Let us now compute the quadratic form \eqref{radialquad} on $\eta$.
We first claim that 
\begin{equation}\label{eq:quad_easier}
	Q_v[\eta]=\int_{x_0}^{x_1}\int_{x_0}^{x_1}K(t-\tau)\left(v'(t)v'(\tau)-|v'(t)v'(\tau)|\right)\,dt\,d\tau.
\end{equation}
Indeed, letting $J=[x_0,x_1]$, \eqref{radialquad} applied to $\eta$ reads

\begin{align*}
Q_v[\eta]&= \frac 1 2 \int_\R \int_\R \left(\eta(t)-\eta(\tau)\right)^2K(t-\tau)\, d\tau dt+\int_\R\left(1-\frac{n+2s}{n-2s}v(t)^\frac{4s}{n-2s}\right)\eta(t)^2dt \\
&=\frac{1}{2}\int_{J}\int_{J}\left(|v'(t)|-|v'(\tau)|\right)^2K(t-\tau)\,d\tau\,dt+\int_J\int_{\r\setminus J}v'(t)^2K(t-\tau)\,d\tau\,dt\\
&+\int_{J} \left(1-\frac{n+2s}{n-2s}v^{\frac{4s}{n-2s}}(t)\right){v'(t)}^2\,dt.
\end{align*}
Additionally, we can differentiate the equation \eqref{eq:cylinder} to get \[ Pv'(t)+\left(1-\frac{n+2s}{n-2s}v(t)^{\frac{4s}{n-2s}}\right)v'(t)=0. \]
Testing this new equation with 
\[ \varphi(t)= \left \{ \begin{array}{ll} v'(t) & t \in [x_0, x_1], \\ 0 & \mbox{ otherwise, }  \end{array} \right. \]
we obtain
\begin{align*}
\int_J\left(1-\frac{n+2s}{n-2s}v(t)^\frac{4s}{n-2s}\right)v'(t)^2dt&=-\int_J v'(t)\left(\text{P.V.} \int_\R (v'(t)-v'(\tau))K(t-\tau)d\tau\right)dt \\ &=-\int_Jv'(t)\lim_{\e\to 0}\int_{\R\setminus(t-\e,t+\e)}\hspace{-0.5cm}(v'(t)-v'(\tau))K(t-\tau)d\tau dt.
\end{align*}
For $R>0$ large enough, reasoning as in the proof of Lemma \ref{lemmaquad}, we get the existence of a constant $C=C(R)>0$ such that
\[
\abs{v'(t)\int_{\R\setminus(t-\e,t+\e)}(v'(t)-v'(\tau))K(t-\tau)d\tau} \leq C\abs{v'(t)},\hsp\text{for all}\hsp t\in J.
\]
Therefore, by the Dominated Convergence Theorem:
\begin{align*}
&\int_Jv'(t)\lim_{\e\to 0}\int_{\R\setminus(t-\e,t+\e)}\hspace{-0.5cm}(v'(t)-v'(\tau))K(t-\tau)d\tau dt = \lim_{\e\to0} \int_J\int_{\R\setminus(t-\e,t+\e)}\hspace{-0.5cm}v'(t)(v'(t)-v'(\tau))K(t-\tau)d \tau \\ &=\lim_{\e\to0} \left(\int_J\int_{J\setminus(t-\e,t+\e)}\hspace{-0.5cm}v'(t)(v'(t)-v'(\tau))K(t-\tau)d \tau+\int_J\int_{(\R\setminus J)\setminus(t-\e,t+\e)}\hspace{-0.5cm}v'(t)(v'(t)-v'(\tau))K(t-\tau)d \tau\right) \\
&= -\frac{1}{2}\int_{J}\int_{J}\left(v'(t)-v'(\tau)\right)^2K(t-\tau)\,d\tau\,dt
-\int_J\int_{\r\setminus J }\hspace{-0.15cm}v'(t)^2K(t-\tau)\,d\tau\,dt,
\end{align*}
where in the last step we have used Fubini's Theorem together with the integrability of $v'(t)(v'(t)-v'(\tau))K(t-\tau)$ in $\{(t,\tau)\in J^2:\,\abs{t-\tau}>\e\}.$ 
Finally, substituting:
\begin{align*}
Q_v[\eta]=&\frac{1}{2}\int_{J}\int_{J}\left(\left(|v'(t)|-|v'(\tau)|\right)^2-\left(v'(t)-v'(\tau)\right)^2\right)K(t-\tau)d\tau dt\\
=&\int_J\int_JK(t-\tau)\left(v'(t)v'(\tau)-|v'(t)||v'(\tau)|\right)d\tau dt.
\end{align*}
Let us define
\begin{equation}
	J^+=\{x\in J:\ v'(t)>0\} \text{ and } J^-=\{x\in J:\ v'(t)<0\},
\end{equation}
which are nonempty because $v$ satisfies \eqref{eq:integral_bounds}. Then, using \eqref{eq:quad_easier} we can bound the quadratic form as follows
\[
	\begin{split}
		Q_v[\eta]&=2\int_{J^+}\int_{J^-}K(t-\tau)\left(v'(t)v'(\tau)-|v'(t)v'(\tau)|\right)\,dt\,d\tau\\
		&=4\int_{J^+}\int_{J^-}K(t-\tau)v'(t)v'(\tau)\,dt\,d\tau< 4 K(10M) \int_{J^+}v'(t)\,dt\int_{J^-}v'(\tau)\,d\tau< -4 K(10M)\e^2.
	\end{split}
\]
The last two inequalities follow from the monotonicity of the kernel, the bound \linebreak $\abs{t-\tau}\leq 2\abs{x_1-x_0}<10M$ and \eqref{eq:integral_bounds}.

\medskip {\bf Step 3:} Conclusion.

\medskip We conclude since $C_0^{\infty}(I)$ is dense in $\{u  \in H^s(\R), \ supp \ u \in I\}$. 
\end{proof}

\begin{proof}[\textbf{Conclusion of the Proof of Theorem \ref{th_morse_index}}] By Proposition \ref{prop2}, it suffices to show that $ind(v) = +\infty$ for any solution $v$ of \eqref{eq:cylinder} satisfying the Oscillation Condition. In this case, we will work with a family of translations of the functions given by Proposition \ref{negative}.

\medskip 

Given $m \in \N$, take $m$ disjoint intervals $I_j$, $j=1, \dots, m,$ of length $5M$, and $\phi_j$ as in Proposition \ref{negative}. In the spirit of the proof of Proposition \ref{prop2}, we consider minimal distance between all the intervals, $d = \min \{|t_i-t_j|; t_i \in I_i, \ t_j \in I_j,\ i \neq j\}$, that will be taken large enough. We remark that the functions $\phi_j$ with $j=1,\ldots,m$ are linearly independent, since they have disjoint supports. 

\medskip 

Now, take any nontrivial linear combination of $\phi_j$, $\phi= \sum_{j=1}^m \lambda_j \phi_j$, and evaluate $Q_v[\phi]$:
\[
Q_v[\phi] = \sum_{j=1}^m \lambda_j^2 Q_v[\phi_k] + \sum_{i \neq j}^m \lambda_i \lambda_j A_v[\phi_i, \phi_j],
\]
where $A_v$ is the bilinear form associated to $Q_v$, that is,
\[
	A_v[\phi, \varphi]=	\frac{1}{2}\int_{\r} \int_{\r}\left(\phi(t)-\phi(\tau)\right)\left(\varphi(t)-\varphi(\tau)\right)K(t-\tau)\,dt\,d\tau+\int_{\r} \left(1-\frac{n+2s}{n-2s}v^{\frac{4s}{n-2s}}(t)\right)\phi(t)\varphi(t)\,dt.
	\] 
We estimate as follows:
\[ Q_v[\phi] \leq - \delta \sum_{j=1}^m \lambda_j^2  + \sum_{i \neq j}^m \lambda_i \lambda_j K(d) \left( \int_{\r} \int_{\r} |\phi_i(t)|\, |\phi_j(\tau)|\, dt \, d \tau \right)   \]\[ \leq - \delta \sum_{j=1}^m \lambda_j^2  + \sum_{i \neq j}^m \lambda_i \lambda_j K(d) \frac{(5M)^2}{\delta^2},\]
where $\delta>0$ is given by Proposition \ref{negative}. By taking $d$ sufficiently large, we can have that $Q_v[\phi]<0$, so $ind(v) \geq m$ by definition. Since $m \in \N$ was arbitrary, we conclude.
\end{proof}

\end{document}